\newtheorem{theorem}{Theorem}[section]
\newtheorem{lemma}[theorem]{Lemma}
\newtheorem{proposition}[theorem]{Proposition}
\newtheorem{corollary}[theorem]{Corollary}
\theoremstyle{definition}
\newtheorem{definition}[theorem]{Definition}
\newtheorem{example}[theorem]{Example}
\newtheorem{remark}[theorem]{Remark}
\newcommand{\ext}{\text{Ext}}
\newcommand{\End}{\text{End}}
\newcommand{\Hom}{\text{Hom}}
\newcommand{\Rep}{\text{Rep}}
\newcommand{\C}{\mathcal{C}}
\newcommand{\ben}{\begin{enumerate}}
\newcommand{\een}{\end{enumerate}}
\theoremstyle{plain}
\newtheorem*{sol}{Solution}
\theoremstyle{definition}
\theoremstyle{remark}
\newcommand{\solu}[1]{\begin{sol}{\bf (\ref{#1})}}
\def\C{\mathcal{C}}
\def\D{\mathcal{D}}
\def\End{\mathrm{End}}
\def\Hom{\mathrm{Hom}}
\def\ext{\mathop{\mathrm{Ext}}\nolimits}
\def\Vec{\mathrm{Vec}}
\def\k{\mathbf{k}}
\def\Rep{\mathop{\mathrm{Rep}}\nolimits}
\def\FPdim{\mathop{\mathrm{FPdim}}\nolimits}
\begin{document}

\title{Frobenius-Perron dimensions of integral $\Bbb Z_+$-rings and applications}

\begin{abstract} We introduce the notion of the Frobenius-Perron dimension of an 
integral $\Bbb Z_+$-ring and give some applications of this notion to classification of 
finite dimensional quasi-Hopf algebras with a unique nontrivial simple module, 
and of quasi-Hopf and Hopf algebras of prime dimension $p$.
\end{abstract}

\author{Pavel Etingof}
\address{Department of Mathematics, Massachusetts Institute of Technology,
Cambridge, MA 02139, USA}
\email{etingof@math.mit.edu}

\maketitle

\vskip .05in
\centerline{\bf To Nicol\'as Andruskiewitsch on his 60th birthday with admiration} 
\vskip .05in

\section{Introduction} 

The goal of this paper is to define and study the notion of the Frobenius-Perron dimension 
of an integral $\Bbb Z_+$-ring $A$ which is not necessarily a fusion ring 
(i.e., does not necessarily have a *-structure). Namely, if $N_i$ are the matrices of 
multiplication by the basis vectors of $A$ and $d_i$ are their largest positive eigenvalues, 
then we let $\bold p$ be the left eigenvector of $N_i$ with eigenvalues $d_i$, normalized to have positive 
integer entries with greatest common divisor $1$. Then we set ${\rm FPdim}(A):=\sum_i p_id_i$. 
For fusion rings $p_i=d_i$ so ${\rm FPdim}(A)=\sum_i d_i^2$, which coincides with the standard definition in \cite{EGNO}. Also, if $A$ is the Grothendieck ring of a finite tensor category $\C$ then ${\rm FPdim}(A)={\rm FPdim}(\C)/D$, where $D$ is the greatest common divisor of the dimensions of the indecomposable projectives of $\C$. 

We prove a number of properties of ${\rm FPdim}(A)$. In particular, we show that if $X\in A$ is a $\Bbb Z_+$-generator of $A$ and $r$ the rank of $A$ then $(2d)^{r-1}\ge {\rm FPdim}(A)$, where $d$ is the Frobenius-Perron dimension of a $\Bbb Z_+$-generator of $A$. 

We then give two applications of this notion. The first application is to classification of quasi-Hopf and Hopf algebras with two simple modules. We give a number of examples of such quasi-Hopf and Hopf algebras and prove a number of restrictions on their structure. The second application is to quasi-Hopf and Hopf algebras $H$ of prime dimension $p$. It is conjectured that any such quasi-Hopf algebra is commutative and cocommutative, but this is not known even in characteristic zero, and we prove some restrictions on the structure of $H$ if it is not commutative and cocommutative; e.g., we show that $H$ has to have at least four simple modules. In particular, it follows that 
a quasi-Hopf algebra of prime dimension $\le 31$ over any field is necessarily commutative and cocommutative. In the case of Hopf algebras, it is known that if $H$ is not commutative and cocommutative then $q>2$ and $\frac{p}{q}>4$ (\cite{NW}). We improve this bound to $\frac{p}{q+2}>\frac{14}{3}$ (Corollary \ref{9/2}). Also, we show that $\frac{p}{q+2}>{\rm min}(9,\phi(p))$, where $\phi$ is a certain function such that $\phi(p)\sim 2\sqrt{2} \left(\frac{\log p}{\log\log p}\right)^{1/2}$ as $p\to \infty$ (Theorem \ref{inner2}). This means that if $p$ is large enough, we must have $\frac{p}{q+2}>9$. Moreover, we show that if $H$ has a nontrivial simple module $X$ with $X\cong X^{**}$ then 
$\frac{p}{q+2}>\phi(p)$. 

The organization of the paper is as follows. In Section 2 we define the Frobenius-Perron dimension 
of an integral $\Bbb Z_+$-ring $A$ and prove basic properties of this notion. In Section 3 we prove the lower bound on the dimension of a $\Bbb Z_+$-generator of $A$. In Section 4 we prove some auxiliary lemmas. In Section 5 we  give applications to quasi-Hopf algebras with two simple modules. In Section 6 we prove some bounds for dimensions of 
tensor categories with no nontrivial invertible objects. 
In Section 7 we give applications to classification of quasi-Hopf algebras of prime dimension. 
Finally, in Section 8 we give applications to classification of Hopf algebras of prime dimension, improving the bound on the characteristic from \cite{NW}.  

\vskip .05in
{\bf Acknowledgements.} The author is grateful to C. Negron for useful discussions and to V. Ostrik, S.-H. Ng and X. Wang 
for corrections and comments on the draft of this paper. The work of the author was partially supported by the NSF
grant DMS-1502244.

\section{The Frobenius-Perron dimension of an integral $\Bbb Z_+$-ring} 

\subsection{Dimensions in integral $\Bbb Z_+$-rings.}
Let $A$ be a transitive unital $\Bbb Z_+$-ring of finite rank (\cite{EGNO}, Definitions 3.1.1, 3.3.1). This means that $A$ is a free finitely generated abelian group with basis $b_i, i\in I$ 
and a unital associative ring structure such that $b_ib_j=\sum_k N_{ij}^kb_k$ with $N_{ij}^k\in \Bbb Z_+$ and $b_0=1$ for some element $0\in I$, and for each $j,k$ there exists $i$ with $N_{ij}^k>0$. In this case we have a homomorphism ${\rm FPdim}: A\to \Bbb R$, determined uniquely by the condition that $d_i:={\rm FPdim}(b_i)>0$, called the {\it Frobenius-Perron dimension} (\cite{EGNO}, Subsection 3.3). 

Let $N_i$ be the matrix with entries $N_{ij}^k$. Let $\bold d=(d_i)$ be the column vector with entries $d_i$. Then we have $N_i\bold d=d_i\bold d$, i.e., $\bold d$ is a common (right) eigenvector of the matrices $N_i$. 
The Frobenius-Perron theorem implies that we also have a left eigenvector $\bold p=(p_i)$ with $p_i>0$ (defined uniquely up to a positive scalar) 
such that $\bold p N_i=d_i\bold p$. 

\begin{lemma}\label{easybound} For each $i,j,k$ one has $p_k\ge \frac{N_{ji}^k}{d_j}p_i$.
In particular, one has $p_k\ge p_0/d_k$. 
\end{lemma}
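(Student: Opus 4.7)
The plan is to simply unpack the left-eigenvector equation $\bold p N_i = d_i\bold p$ coordinate by coordinate and throw away nonnegative terms.

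First I fix conventions. The paper records the right-eigenvector equation $N_i\bold d = d_i\bold d$ together with the identity $d_id_j = \sum_k N_{ij}^k d_k$ expressing that ${\rm FPdim}$ is a ring homomorphism. Matching these forces $N_i$ to have $(j,k)$-entry $N_{ij}^k$. With this convention, reading the row vector equation $\bold p N_i = d_i\bold p$ in the $k$-th coordinate yields
\[d_i p_k \;=\; \sum_j p_j N_{ij}^k.\]

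Now every $p_j$ is strictly positive and every $N_{ij}^k$ is a nonnegative integer, so each term on the right-hand side is nonnegative. Dropping all summands except the one indexed by a chosen $j$ gives $d_ip_k \geq p_j N_{ij}^k$, i.e.\ $p_k \geq \frac{N_{ij}^k}{d_i}p_j$, and relabeling $(i,j)\leftrightarrow (j,i)$ produces the stated inequality $p_k \geq \frac{N_{ji}^k}{d_j}p_i$. For the second assertion, specialize to $j=k$ and $i=0$: the unit axiom $b_k b_0 = b_k$ gives $N_{k0}^k = 1$, whence $p_k \geq p_0/d_k$.

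There is really no obstacle here beyond keeping the two conventions (matrix-of-multiplication versus structure-constants) straight; the check against the right eigenvector equation already supplied in the paper pins down the convention, after which the inequality is immediate from positivity of the coordinates of $\bold p$ and nonnegativity of the structure constants.
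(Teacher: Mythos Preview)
Your argument is correct and is exactly the paper's approach: the paper writes the single line $d_jp_k=\sum_i N_{ji}^kp_i$ and says this implies the statement, which is precisely your computation after the relabeling. Your explicit verification of the matrix convention and of the special case $j=k$, $i=0$ (which the paper leaves implicit) is a welcome addition but does not differ in substance.
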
 

\begin{proof} One has $d_jp_k=\sum_i N_{ji}^kp_i$, which implies the statement. 
\end{proof}

If $A$ is a fusion ring then $N_{i^*}=N_i^T$, so $\bold p$ is proportional to $\bold d$ and it is natural to normalize 
$\bold p$ by setting $\bold p=\bold d$; in this case the number ${\rm FPdim}(A):=\bold p\cdot \bold d=|\bold d|^2=\sum_i d_i^2$ is called the Frobenius-Perron dimension of $A$ (\cite{EGNO}, Section 3.3).
However, in general $\bold p$ does not come with a natural normalization, which is why in \cite{EGNO} the Frobenius-Perron dimension 
of a general $\Bbb Z_+$-ring of finite rank is not defined. 

Note that when $A$ is the Grothendieck ring of a finite tensor category $\C$ then one may take $p_i$ to be the Frobenius-Perron dimensions of the indecomposable projectives, and $\bold p\cdot\bold d=\sum_i p_id_i$ is then called the Frobenius-Perron dimension of $\C$. However, 
this normalization depends on the choice of $\C$: e.g., the categories $\C_1$ of representations of $\Bbb Z/2$ 
and $\C_2$ of representations of Sweedler's 4-dimensional Hopf algebra categorify the same $\Bbb Z_+$-ring but give different normalizations of $\bold p$. 

Now recall (\cite{EGNO}) that $A$ is said to be {\it integral} if ${\rm FPdim}$ lands in $\Bbb Z$. 
In this case, the vector $\bold p$ can be uniquely normalized in such a way that $p_i$ are positive integers 
whose greatest common divisor is $1$. Let us call this normalization the {\it canonical normalization}.

\begin{definition} The Frobenius-Perron dimension of $A$ is 
${\rm FPdim}(A):=\bold p\cdot \bold d=\sum_i p_id_i$, 
where $p_i$ are normalized canonically. 
\end{definition} 

In particular, we see that ${\rm FPdim}(A)\ge \sum_i d_i$.

Note that for fusion rings the canonical normalization just gives $\bold p=\bold d$ as above, so our definition of ${\rm FPdim}(A)$ agrees with the one in \cite{EGNO}. However, in general $p_0\ne 1$, as shown by the following example. 

\begin{example}\label{X2=4} Consider the ring $A$ with basis $1,X$ with $X^2=4$. 
Thus, $A$ is the Grothendieck ring of the representation category $\C=\Rep(U)$
of the restricted enveloping algebra $U$ over a field $\k$ 
of characteristic $2$ generated by primitive elements $x,y,z$ with $z$ central, 
$[x,y]=z$ and $x^2=y^2=0, z^2=z$. Then $d_0=1, d_1=2$, $p_0=2, p_1=1$. 
This shows that the normalization with $p_0=1$ is not very natural: this would give $p_1=1/2$, 
which is not an algebraic integer. 
\end{example} 

\subsection{Grothendieck rings of integral finite tensor categories}
Now assume that $A$ is the Grothendieck ring of a finite tensor category $\C$. Then $\C$ is integral, i.e., of the form $\Rep H$, where $H$ is a finite dimensional quasi-Hopf algebra (\cite{EGNO}). Thus the Frobenius-Perron dimensions of objects of $\C$ are their usual vector space dimensions, and ${\rm FPdim}(\C)=\dim H$. In this case $p_i$ divide the dimensions $\dim(P_i)$ of the indecomposable projectives $P_i$ but aren't necessarily equal to them. E.g., if $H$ is Sweedler's 4-dimensional Hopf algebra, then $p_0=p_1=1$ but the indecomposable projectives $P_0,P_1$ are 2-dimensional. It is easy to see that in general we have the following proposition.

\begin{proposition}\label{gcd} Let $\C$ be an integral finite tensor category with Grothendieck ring $A$, and $D$ be the greatest common divisor of dimensions of the projective objects of $\C$. Then ${\rm FPdim}(\C)=D\cdot {\rm FPdim}(A)$.  
\end{proposition}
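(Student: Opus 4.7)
The plan is to identify the canonically normalized left Perron vector $\bold p = (p_i)$ with the vector of projective dimensions divided by $D$; precisely, to show $\dim(P_i) = D\,p_i$ for every $i$. Granting this,
$$\text{FPdim}(\C) = \sum_i d_i\,\dim(P_i) = D\sum_i d_i p_i = D\cdot\text{FPdim}(A),$$
which is exactly the claim.

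The main technical step is to verify that the positive integer vector $\bold q := (\dim P_i)_{i\in I}$ is a common left eigenvector of the matrices $N_i$ with eigenvalues $d_i$. To do this, fix $i, j$ and decompose the projective object $X_i\otimes P_j$ into indecomposable projectives, $X_i\otimes P_j \cong \bigoplus_k P_k^{m_k}$. Using the $(X_i\otimes -,\,X_i^*\otimes -)$ adjunction together with the identity $\dim\Hom(P_j, Y) = [Y:X_j]$ (valid for any object $Y$ of $\C$), one computes
$$m_k = \dim\Hom(X_i\otimes P_j, X_k) = \dim\Hom(P_j, X_i^*\otimes X_k) = [X_i^*\otimes X_k:X_j] = N_{i^* k}^j.$$
Taking linear dimensions on both sides yields $d_i q_j = \sum_k N_{i^* k}^j q_k$, i.e.\ $\bold q N_{i^*} = d_i \bold q$. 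Since $d_{i^*} = d_i$ in a tensor category and $i\mapsto i^*$ is a permutation of $I$, this is equivalent to $\bold q N_i = d_i \bold q$ for every $i$, as required.

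I then invoke the uniqueness part of the Perron--Frobenius theorem (applicable because $A$ is transitive): a common positive left $d_i$-eigenvector of the $N_i$ is unique up to a positive scalar, so $\bold q = c\,\bold p$ for some $c > 0$. Since the $p_i$ are positive integers with greatest common divisor $1$ and the $q_i$ are positive integers, the scalar $c$ must equal the positive integer $\gcd_i q_i = D$. This gives $\dim(P_i) = D p_i$, and the displayed summation above completes the proof.

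The point requiring care is the adjunction computation identifying $m_k$ with $N_{i^* k}^j$: as stated it uses that $\End(X_i) = \k$ for each simple $X_i$. Over a non-algebraically-closed field one adjusts by dividing the relevant multiplicities by $\dim_\k \End(X_i)$, or simply passes to the algebraic closure, noting that $D$, $\text{FPdim}(A)$, and $\text{FPdim}(\C)$ are all compatible with such a base change.
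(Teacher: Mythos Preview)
Your proof is correct and follows essentially the same approach as the paper. The paper does not spell out a proof, merely noting beforehand that one may take $p_i=\dim P_i$ as a valid (unnormalized) choice of left Perron eigenvector; you supply the standard verification of this fact via the adjunction computation and then deduce $\dim P_i=D\,p_i$ from the gcd-$1$ normalization, exactly as intended.
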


We also have 

\begin{proposition}\label{p1} Let $X_i$ be the simple objects of $\C$. Then one has $p_iD\ge d_i$, and the equality holds for a given $i$ if and only if the object $X_i$ is projective. Otherwise, one has $p_iD\ge 2d_i$. 
\end{proposition}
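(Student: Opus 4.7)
The core step is to establish the termwise identity $\dim P_i = D p_i$; the proposition then reduces to elementary facts about the projective cover $P_i \twoheadrightarrow X_i$.

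To prove $\dim P_i = D p_i$, I will show that the row vector $\bold q := (\dim P_i)$ is a common left eigenvector of the matrices $N_j$ with eigenvalues $d_j$. Transitivity of $A$ makes $\sum_j N_j$ a strictly positive matrix, so its left Perron--Frobenius eigenvector is unique up to positive scalar; together with the fact that $\bold p$ is such an eigenvector this forces $\bold q = c \bold p$, and the normalizations $\gcd_i p_i = 1$ and $\gcd_i \dim P_i = D$ pin down $c = D$. To verify the eigenvector property I use that in a tensor category $X_j \otimes P_k$ is projective, so decomposes as $\bigoplus_l P_l^{m_l}$. The multiplicity is computed via tensor--hom adjunction and the identity $\dim \Hom(P_k, Y) = [Y : X_k]$:
\[ m_l = \dim \Hom(X_j \otimes P_k, X_l) = \dim \Hom(P_k, X_j^* \otimes X_l) = N_{j^*, l}^k. \]
Taking Frobenius--Perron dimensions gives $d_j \dim P_k = \sum_l N_{j^*, l}^k \dim P_l$; reindexing $i := j^*$ and using $d_{j^*} = d_j$ turns this into the desired identity $\bold q N_i = d_i \bold q$.

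With $\dim P_i = D p_i$ in hand, the projective cover surjection $P_i \twoheadrightarrow X_i$ immediately gives $D p_i \ge d_i$, with equality iff $P_i = X_i$, i.e., iff $X_i$ is projective. When $X_i$ is not projective, let $K_i := \ker(P_i \to X_i)$, which is a nonzero subobject; I must show $\dim K_i \ge d_i$. In a finite tensor category, $P_i$ is also an indecomposable injective, so $\soc(P_i)$ is simple; write it as $X_{\sigma(i)}$. The permutation $\sigma$ is built from the double dual and tensoring with the distinguished invertible object, both of which preserve Frobenius--Perron dimension, so $d_{\sigma(i)} = d_i$. If the composite $\soc(P_i) \hookrightarrow P_i \twoheadrightarrow X_i$ were nonzero it would be an isomorphism between simples, exhibiting $X_i$ as a direct summand of the indecomposable $P_i$ and forcing $P_i = X_i$, contrary to assumption. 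Hence $\soc(P_i) \subseteq K_i$, and $\dim K_i \ge d_{\sigma(i)} = d_i$, giving $\dim P_i \ge 2 d_i$.

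The main obstacle I foresee is the first step: setting up the projective decomposition of $X_j \otimes P_k$ and assembling indices in the adjunction calculation correctly to identify $\bold q$ with a positive scalar multiple of $\bold p$. The socle-versus-top argument in the second half is then a short structural observation that invokes only standard finite-tensor-category facts (projectives = injectives, simplicity of the socle of an indecomposable injective, and invariance of Frobenius--Perron dimension under the double dual and tensoring with invertibles).
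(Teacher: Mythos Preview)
Your proof is correct and follows essentially the same approach as the paper. The paper's proof is much terser: it takes the identity $\dim P_i = Dp_i$ as already established (from the discussion preceding Proposition~\ref{gcd}) and then simply invokes ``the Frobenius property of $\C$'' to conclude $\dim P_i \ge 2\dim X_i$ when $X_i$ is not projective---exactly the socle/head argument you spell out in detail.
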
 

\begin{proof} The first statement follows from the fact that the projective cover $P_i$ of $X_i$ contains 
$X_i$ in its composition series. For the second statement, it is enough to note that by the Frobenius property of $\C$ we have $\dim P_i\ge 2\dim X_i$ if $X_i$ is not projective. 
\end{proof} 

\begin{example}\label{unip} A tensor category $\C$ is called {\it unipotent} if its only simple object is the unit object, i.e. 
$A=\Bbb Z$ or, equivalently, ${\rm FPdim}(A)=1$ (\cite{EG1}). In other words, $\C={\rm Rep}H$, where $H$ is a local finite dimensional quasi-Hopf algebra. By \cite{EGNO}, Corollary 4.4.2, for $\dim H>1$ this can happen only in characteristic $p>0$. Moreover, in this case the associated graded  ${\rm gr}(H)$ of $H$ with respect to the radical filtration is cocommutative \footnote{This is a straightforward generalization of \cite{W}, Proposition 2.2(7) to the quasi-Hopf case, and I thank 
S. Gelaki for pointing this out.}, i.e. the group algebra of a finite unipotent group scheme over $k$, so 
$\dim H={\rm FPdim}(\C)$ is a power of $p$ (and clearly any power can arise this way).  
\end{example}

\section{Lower bound on the dimension of a $\Bbb Z_+$-generator}

Let $A$ be an integral $\Bbb Z_+$-ring of rank $r>1$ 
and $b=\sum_i m_ib_i\in A$ with $m_i\ge 0$. Let us say that $b$ is a $\Bbb Z_+$-{\it generator} of $A$
if for some $n$ all coefficients of $1+b+...+b^n$ are positive. Let $d={\rm FPdim}(b)$ and $N={\rm FPdim}(A)$. 

\begin{theorem}\label{bound} (i) Let $\chi$ be the characteristic polynomial of $b$ on $A$, and $Q(z)=(z-d)^{-1}\chi(z)$. Then $Q(d)$ is a positive integer divisible by $N$. In particular, $Q(d)\ge N$. 

(ii) Let $s$ be the number of roots of $Q$ with positive imaginary part (counted with multiplicities). Then 
$$
d\ge N^{\frac{1}{r-1}}\left(1-\frac{s+1}{r}\right)^{1-\frac{s}{r-1}},
$$ 
 
(iii)
We have 
$$
d\ge N^{\frac{1}{r-1}}\left( \frac{\lceil \frac{r-1}{2}\rceil}{r}\right)^{\frac{\lceil \frac{r-1}{2}\rceil}{r-1}}. 
$$
In particular, $d\ge \frac{1}{2}N^{\frac{1}{r-1}}$. 
\end{theorem}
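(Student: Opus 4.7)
I proceed by establishing the three parts in order.

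\smallskip

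For (i), because $b$ is a $\mathbb{Z}_+$-generator, Perron--Frobenius gives that $d$ is a simple eigenvalue of the nonnegative integer matrix $B$ of multiplication by $b$, and every other eigenvalue $\lambda$ satisfies $|\lambda|<d$. Hence $Q(d)=\prod(d-\lambda)>0$: real roots contribute positive factors, complex conjugate pairs contribute $|d-\lambda|^{2}>0$. For divisibility, integrality of $A$ gives $d\in\mathbb{Z}$, so $Q(z)=\chi(z)/(z-d)\in\mathbb{Z}[z]$ and $Q(B)$ is an integer matrix. By Cayley--Hamilton, $(B-dI)Q(B)=0=Q(B)(B-dI)$, so $Q(B)=c\,\mathbf{d}\,\mathbf{p}^{T}$ for some scalar $c$; evaluating on $\mathbf{d}$ yields $c=Q(d)/N$. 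Since $d_{0}=1$, the zeroth row of $Q(B)$ reads $(Q(d)/N)p_{j}\in\mathbb{Z}$ for every $j$, and $\gcd_{j}p_{j}=1$ then forces $N\mid Q(d)$, so $Q(d)\ge N$.

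\smallskip

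For (ii), split the non-$d$ roots of $\chi$ into $r-1-2s$ real values $\mu_{i}$ and $s$ complex conjugate pairs $\{\lambda_{j},\bar\lambda_{j}\}$. Since $|\lambda_{j}|\le d$, one has $|d-\lambda_{j}|^{2}=(d-\mathrm{Re}\,\lambda_{j})^{2}+(\mathrm{Im}\,\lambda_{j})^{2}\le 2d(d-\mathrm{Re}\,\lambda_{j})$. Writing $u_{i}=d-\mu_{i}$ and $w_{j}=d-\mathrm{Re}\,\lambda_{j}$,
\[
Q(d)\le (2d)^{s}\prod_{i}u_{i}\prod_{j}w_{j}.
\]
Because $B$ has nonnegative entries, $\mathrm{tr}(B)\ge 0$, hence the sum of non-$d$ eigenvalues is $\ge -d$, which rewrites as the weighted linear constraint $\sum_{i}u_{i}+2\sum_{j}w_{j}\le rd$. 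Maximizing $\prod u_{i}\prod w_{j}$ under this constraint by Lagrange multipliers forces $u_{i}=2w_{j}=rd/(r-1-s)$, giving maximum $(rd/(r-1-s))^{r-1-s}/2^{s}$. The $2^{s}$ cancels the one in $(2d)^{s}$, leaving
\[
Q(d)\le d^{\,r-1}\bigl(r/(r-1-s)\bigr)^{r-1-s}.
\]
Combining with $Q(d)\ge N$ from (i) and extracting the $(r-1)$-th root proves (ii).

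\smallskip

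For (iii), view the right-hand side of (ii) as a function $g(x)=(x/r)^{x/(r-1)}$ of $x=r-1-s\in\{\lceil(r-1)/2\rceil,\dots,r-1\}$. Then $(\log g)'(x)=(\log(x/r)+1)/(r-1)$ vanishes only at $x=r/e$ and is positive for larger $x$, so $g$ is nondecreasing on the relevant range whenever $\lceil(r-1)/2\rceil\ge r/e$, which holds for all $r\ne 3$; the case $r=3$ is settled by the direct computation $g(1)=1/\sqrt{3}<2/3=g(2)$. Hence the minimum of $g$ on the integer range is at $x=\lceil(r-1)/2\rceil$, giving the first inequality in (iii). A short parity case analysis then shows $g(\lceil(r-1)/2\rceil)\ge 1/2$, yielding the final bound $d\ge \tfrac12 N^{1/(r-1)}$.

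\smallskip

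The main obstacle is obtaining the sharp constant in (ii). A naive equal-weight AM--GM on $r-1-s$ factors produces a bound weaker by a factor of $2^{s}$; the point is that the trace constraint from $\mathrm{tr}(B)\ge 0$ is the \emph{weighted} inequality $\sum u_{i}+2\sum w_{j}\le rd$, and the Lagrange optimum $u_{i}=2w_{j}$ delivers precisely the factor $1/2^{s}$ that cancels the $(2d)^{s}$ coming from the complex-pair estimate $|d-\lambda|^{2}\le 2d(d-\mathrm{Re}\,\lambda)$. Arranging this cancellation is what makes the inequality tight.
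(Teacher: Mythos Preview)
Your proof is correct and follows essentially the same route as the paper: the same Perron--Frobenius simplicity of $d$, the same complex-pair estimate $|d-\lambda|^{2}\le 2d(d-\mathrm{Re}\,\lambda)$, the same trace constraint $\mathrm{tr}(B)\ge 0$, and the same AM--GM/Lagrange step to reach the bound in (ii), followed by minimization over $s$ for (iii). The only cosmetic difference is in (i), where the paper dots with an auxiliary integer vector $\mathbf v$ satisfying $\mathbf p\cdot\mathbf v=1$, whereas you read off the integrality from the explicit rank-one form $Q(B)=\tfrac{Q(d)}{N}\,\mathbf d\,\mathbf p^{T}$; these two arguments are equivalent.
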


\begin{proof} 
(i) First note that by the Frobenius-Perron theorem, $d$ is a simple eigenvalue of $b$, so $Q(d)$ is a positive integer. It remains to show that $Q(d)$ is divisible by $N$. Let $\bold v$ be an integral row vector such that $\bold p\cdot \bold v=1$ (it exists since $p_i$ are relatively prime). We have $Q(b)\bold v=m\bold d$, where $m=(Q(b)\bold v)_0$ is an integer. Therefore 
$$
Q(d)=Q(d)\bold p\cdot \bold v=\bold p Q(b)\bold v=m\bold p\cdot \bold d=mN.
$$ 
This implies that $N$ divides $Q(d)$. 

(ii) Let $\lambda_i$, $i=1,...,r-2s-1$ be the real roots of $Q$, and $\mu_1,...\mu_s$ the roots with positive imaginary part. 
We have 
$$
Q(d)=\prod_{i=1}^{r-2s-1} (d-\lambda_i)\prod_{j=1}^s |d-\mu_j|^2\ge N.
$$
Let $\alpha_i=\lambda_i/d$ and $\beta_j=\mu_j/d$. Then 
$$
\prod_{i=1}^{r-2s-1} (1-\alpha_i)\prod_{j=1}^s |1-\beta_j|^2\ge \frac{N}{d^{r-1}}.
$$
This can be written as 
$$
\prod_{i=1}^{r-2s-1} (1-\alpha_i)\prod_{j=1}^s (1-2{\rm Re}\beta_j+|\beta_j|^2)\ge \frac{N}{d^{r-1}}.
$$
By the Frobenius-Perron theorem, $|\beta_j|\le 1$, so we get 
$$
\prod_{i=1}^{r-2s-1} (1-\alpha_i)\prod_{j=1}^s (2-2{\rm Re}\beta_j)\ge \frac{N}{d^{r-1}}.
$$
Thus by the arithmetic and geometric mean inequality, 
$$
\sum_{i=1}^{r-2s-1} (1-\alpha_i)+\sum_{j=1}^s (2-2{\rm Re}\beta_j)\ge (r-s-1)\left(\frac{N}{d^{r-1}}\right)^{\frac{1}{r-s-1}}.
$$
But the left hand side is the trace of $1-b/d$, so it is $\le r$, as ${\rm Tr}(b)\ge 0$. Thus we get 
$$
r\ge (r-s-1)\left(\frac{N}{d^{r-1}}\right)^{\frac{1}{r-s-1}}.
$$
This implies that 
$$
d\ge N^{\frac{1}{r-1}}\left(1-\frac{s+1}{r}\right)^{1-\frac{s}{r-1}}, 
$$
as desired. 

(iii) Analyzing the function of $s$ in (ii) and using that $2s\le r-1$, we find easily that the worst case scenario is the maximal possible value of $s$, i.e., $s=\lfloor \frac{r-1}{2}\rfloor$. This gives the result. 
The last statement follows from the fact that 
$$
\left( \frac{\lceil \frac{r-1}{2}\rceil}{r}\right)^{\frac{\lceil \frac{r-1}{2}\rceil}{r-1}}\ge \frac{1}{2}.
$$
\end{proof} 

\section{Auxiliary lemmas}

\begin{lemma}\label{loca} Let $H$ be a local finite dimensional quasi-Hopf algebra, $M$ a finite dimensional $H$-module, and $a: M\to M^{**}$ a homomorphism. Then generalized eigenspaces of $a$ (regarded as a linear map $M\to M$) are $H$-submodules of $M$. In particular, if $M$ is indecomposable, then all eigenvalues of $a$ are the same. 
\end{lemma}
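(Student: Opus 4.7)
The plan is to translate the hypothesis that $a$ is $H$-linear into an operator identity on $M$ and then exploit the nilpotence of $J := {\rm rad}(H)$. Fix the canonical vector-space isomorphism $M \cong M^{**}$ and regard $a$ as an endomorphism $b \in \End(M)$. The $H$-action on $M^{**}$, transported back along this isomorphism, gives a second representation $\rho^{**}: H \to \End(M)$ that factors as $\rho^{**}(h) = \rho(\sigma(h))$ for an algebra automorphism $\sigma$ of $H$: in the Hopf case $\sigma = S^2$, while in the quasi-Hopf case $\sigma$ is a conjugate of $S^2$ by an invertible element built from $\alpha, \beta$. The $H$-linearity of $a$ reads $b\,\rho(h) = \rho(\sigma(h))\,b$. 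The key consequence of the locality of $H$ is $\sigma \equiv \id \pmod{J}$: by Example \ref{unip}, $\gr(H)$ with respect to the radical filtration is cocommutative, so $S^2$ is the identity on $\gr(H)$, and any inner automorphism of $H$ is trivial modulo $J$ since $H/J = k$. Setting $\delta(h) := \sigma(h) - h \in J$, the operator $\rho(\delta(h))$ is nilpotent on $M$.

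From the above one immediately obtains the basic commutation
\[
(b - \la)\,\rho(h) \;=\; \rho(h)(b - \la) \;+\; \rho(\delta(h))\,b
\]
for every scalar $\la$. Let $V_\la := \ker\bigl((b - \la)^N\bigr)$ be the generalized $\la$-eigenspace of $b$ for $N$ large. To show $\rho(h)V_\la \subseteq V_\la$ I iteratively expand $(b - \la)^k\,\rho(h)$: each commutation of $(b - \la)$ past a $\rho(\cdot)$ produces a fresh factor $\rho(r)$ with $r \in J$ followed by a $b$, and the intertwining relation $b\,\rho(x) = \rho(\sigma(x))\,b$ (together with $\sigma(J) \subseteq J$) lets me migrate every $b$-factor to the right. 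Each resulting monomial then collapses to the form $\rho(y)\cdot b^i\,(b - \la)^j$ with $y \in J^s$, where $s$ counts the radical insertions accumulated during the expansion. Since $J^n = 0$ for some $n$, every monomial with $s \ge n$ vanishes; for $k$ taken sufficiently large, every surviving monomial ends in a factor $(b - \la)^N$ which annihilates $v \in V_\la$. Hence $(b - \la)^{N'}\rho(h)v = 0$ for $N'$ large, proving $\rho(h)V_\la \subseteq V_\la$.

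The generalized-eigenspace decomposition $M = \bigoplus_\la V_\la$ is therefore a direct-sum decomposition of $M$ as an $H$-module. If $M$ is indecomposable, only one $V_\la$ can be non-zero, and $b$ has a single eigenvalue on $M$. The main obstacle is step 1 in the quasi-Hopf setting: describing $\sigma$ precisely involves the reassociator $\Phi$ and the elements $\alpha, \beta$, so the explicit formula is more intricate than in the Hopf case. However, what the iteration argument really needs is only the congruence $\sigma \equiv \id \pmod{J}$, which follows uniformly from the cocommutativity of $\gr(H)$ provided by Example \ref{unip} together with the triviality modulo $J$ of inner automorphisms of $H$.
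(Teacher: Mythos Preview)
Your proposal is correct and follows essentially the same route as the paper: both use that $S^2\equiv\id$ on $\gr(H)$ (from cocommutativity of $\gr(H)$) to get $\delta(h)=\sigma(h)-h\in J$ with $(S^2-1)(J^k)\subset J^{k+1}$, and then iteratively commute $(a-\lambda)$ past $h$; the paper simply makes your iterative expansion explicit via the closed binomial identity
\[
(a-\lambda)^N h\,v=\sum_{i=0}^N\binom{N}{i}(S^2-1)^i(h)\,(a-\lambda)^{N-i}a^i v.
\]
One minor remark: your worry about $\sigma$ in the quasi-Hopf case is unnecessary, since under the canonical vector-space identification $M\cong M^{**}$ the $H$-action on $M^{**}$ is via $S^2$ just as in the Hopf case (the elements $\alpha,\beta,\Phi$ enter only in the evaluation/coevaluation maps and the associator, not in the module structure on duals), so you may take $\sigma=S^2$ directly.
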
 

\begin{proof} The associated graded ${\rm gr}(H)$ of $H$ with respect to the radical filtration 
is a Hopf quotient of an enveloping algebra, so it has $S^2=1$. Thus the operator $S^2-1$ strictly preserves 
the radical filtration. We have $ah=S^2(h)a$, so 
$$
(a-\lambda)hv=h(a-\lambda)v+(S^2-1)(h)av
$$
We claim that 
$$
(a-\lambda)^Nhv=\sum_{i=0}^N
\binom{N}{i}(S^2-1)^i(h)(a-\lambda)^{N-i}a^iv.
$$
The proof is by induction in $N$. 
Namely, the case $N=0$ is clear, and for the induction step note that 
$$
(a-\lambda)\sum_{i=0}^N
\binom{N}{i}(S^2-1)^i(h)(a-\lambda)^{N-i}a^iv=
$$
$$
=\sum_{i=0}^N
\binom{N}{i}(S^2-1)^i(h)(a-\lambda)^{N+1-i}a^iv+\sum_{i=0}^N
\binom{N}{i}(S^2-1)^{i+1}(h)(a-\lambda)^{N-i}a^{i+1}v,
$$
so the induction step follows from the identity $\binom{N}{i}+\binom{N}{i-1}=\binom{N+1}{i}$. 

Therefore, since $(S^2-1)^i(h)$ is zero for sufficiently large $i$, the generalized eigenspaces 
of $a$ are $H$-submodules of $M$. This implies the statement. 
\end{proof} 

\begin{remark} Note that in Lemma \ref{loca}, the ordinary eigenspaces of $a$ (as opposed to generalized eigenspaces) 
don't have to be submodules of $M$, e.g. $M=H$ and $a=S^2$ when $S^2\ne 1$. An example of a local Hopf algebra 
with $S^2\ne 1$  is given in \cite{NWW}, Theorem 1.3, case B3. 
\end{remark}

\begin{lemma}\label{commu} Let $V$ be a simple object of a tensor category $\C$, and 
$a: V\to V^{**}$ be an isomorphism. Then the morphism $a\otimes (a^*)^{-1}$ commutes 
with the composition of $a\otimes 1$, evaluation and coevaluation 
$$
V\otimes V^*\to V^{**}\otimes V^*\to \bold 1\to V\otimes V^*
$$ 
\end{lemma}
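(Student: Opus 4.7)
The plan is to establish the claimed commutation by direct manipulation, invoking the naturality of coevaluation and evaluation in the rigid category. Let $\Phi := \coev_V \circ \ev_{V^*} \circ (a \otimes \id_{V^*})$ denote the given endomorphism of $V \otimes V^*$, and let $\Phi' := \coev_{V^{**}} \circ \ev_{V^{***}} \circ (a^{**} \otimes \id_{V^{***}})$ denote the analogous endomorphism of $V^{**} \otimes V^{***}$, where $a^{**} = (a^*)^* : V^{**} \to V^{****}$ is the canonical lift of $a$. With this notation, the claim becomes the commutative square
$$(a \otimes (a^*)^{-1}) \circ \Phi = \Phi' \circ (a \otimes (a^*)^{-1}).$$

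First I would invoke the naturality of coevaluation applied to the isomorphism $a: V \to V^{**}$. In any rigid tensor category, an isomorphism $f: X \to Y$ satisfies $\coev_Y = (f \otimes (f^*)^{-1}) \circ \coev_X$. Setting $f = a$ gives
$$(a \otimes (a^*)^{-1}) \circ \coev_V = \coev_{V^{**}}.$$
Next I would apply the dual naturality of evaluation to the isomorphism $(a^*)^{-1}: V^* \to V^{***}$, which yields
$$\ev_{V^{***}} = \ev_{V^*} \circ ((a^{**})^{-1} \otimes a^*),$$
using the identification $((a^*)^{-1})^* = (a^{**})^{-1}$.

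Combining the two identities proves the claim: the left-hand side rewrites, via the first identity, as $\coev_{V^{**}} \circ \ev_{V^*} \circ (a \otimes \id)$; the right-hand side becomes $\coev_{V^{**}} \circ \ev_{V^{***}} \circ ((a^{**} \circ a) \otimes (a^*)^{-1})$, which after substituting the second identity and cancelling $(a^{**})^{-1} \circ a^{**} = \id$ and $a^* \circ (a^*)^{-1} = \id$ reduces to the same expression. The main obstacle is not the underlying idea but the bookkeeping of higher duals: one must carefully distinguish $a$, $a^*$, $(a^*)^{-1}$, and $a^{**}$, and keep tensor factor orderings consistent in a non-pivotal rigid category. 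The simplicity hypothesis on $V$ enters implicitly via $\End(V) = \k$, which makes the quantum trace $\ev_{V^*} \circ (a \otimes \id) \circ \coev_V$ a scalar and endows $\Phi$ with its familiar interpretation as a rank-one endomorphism factoring through $\bold 1$.
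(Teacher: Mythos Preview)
Your proof is correct and follows essentially the same approach as the paper's: both arguments rest on the naturality identities $(a\otimes (a^*)^{-1})\circ\coev_V=\coev_{V^{**}}$ and $\ev_{V^{***}}\circ(a^{**}\otimes (a^*)^{-1})=\ev_{V^*}$, together with the evident compatibility with $a\otimes 1$. The paper states this in one line (``$a\otimes(a^*)^{-1}$ preserves the evaluation and the coevaluation morphism, so it commutes with their composition; it also obviously commutes with $a\otimes 1$''), whereas you have spelled out the bookkeeping with $\Phi'$ explicitly; as a side remark, note that the simplicity of $V$ is not actually used in the commutation argument itself, only in its later applications.
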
 

\begin{proof} It is easy to see that the morphism $a\otimes (a^*)^{-1}$ 
preserves the evaluation and the coevaluation moprhism, so it commutes with their composition. 
It also obviously commutes with $a\otimes 1$. 
\end{proof} 

Now let $H$ be a finite dimensional Hopf algebra over an algebraically closed field $\k$ with antipode $S$
such that $S^2\ne 1$. 

\begin{lemma}\label{nontri} Let $M$ be a finite dimensional $H$-module which tensor generates $\Rep(H)$ and $a: M\to M^{**}$ be an isomorphism. Then $a$ is not a scalar (when regarded as a linear map $M\to M$). 
\end{lemma}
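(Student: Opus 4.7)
The plan is to argue by contradiction: assume that $a=\lambda\cdot\id_M$ for some $\lambda\in\k^\times$ (nonzero since $a$ is an isomorphism) and derive $S^2=\id$. The first step is to unwind what it means for $a$ to be an $H$-module homomorphism. Under the canonical identification $M^{**}\cong M$ of vector spaces, the $H$-action on $M^{**}$ becomes $h\cdot m=S^2(h)m$, since $(hm)(\phi)=m(S(h)\phi)=\phi(S^2(h)m)$. The relation $a(hm)=S^2(h)a(m)$ then reduces to $hm=S^2(h)m$ for all $h\in H$ and $m\in M$; in other words, $h-S^2(h)$ annihilates $M$.

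Next I would promote this annihilation from $M$ to every object of $\Rep(H)$. Since $S$ is an anti-algebra and anti-coalgebra map (this uses that $H$ is Hopf, not merely quasi-Hopf), $S^2$ is an algebra and coalgebra automorphism of $H$; in particular $\Delta\circ S^2=(S^2\otimes S^2)\circ\Delta$. A short check then shows that if $h$ and $S^2(h)$ act identically on $V$ and on $W$, the same holds on $V\otimes W$; an analogous computation, using the formula for the $H$-action on duals, handles $V^*$; subquotients are trivial. Since $M$ tensor-generates $\Rep(H)$, every object of $\Rep(H)$ is a subquotient of a finite direct sum of iterated tensor products of $M$ and $M^*$, so $h-S^2(h)$ acts as zero on every $H$-module.

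Applying this to the regular representation and evaluating at $1\in H$ yields $h=h\cdot 1=S^2(h)\cdot 1=S^2(h)$ for every $h\in H$, contradicting $S^2\neq\id$. I do not expect any real obstacle: once the action on $M^{**}$ is identified with the $S^2$-twist of the action on $M$, the rest is formal tensor-categorical bookkeeping. The one place meriting care is the verification that $h-S^2(h)$ behaves well under tensor products and duals, and it is exactly here that the hypothesis ``$H$ is a Hopf algebra'' (so that $S^2$ is a genuine bialgebra automorphism rather than only inner up to a coboundary) is used; this explains why the statement is not asserted in the quasi-Hopf setting.
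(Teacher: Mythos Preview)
Your proof is correct and follows essentially the same approach as the paper's: assume $a$ is scalar, deduce that $h$ and $S^2(h)$ act identically on $M$, propagate this to tensor products of $M$ and $M^*$, and then use that $M$ tensor generates $\Rep(H)$ to reach the regular representation and conclude $S^2=1$. Your write-up simply spells out more carefully the identification of the $H$-action on $M^{**}$ and the verification that the property passes to tensor products and duals via the fact that $S^2$ is a bialgebra automorphism; the paper's proof compresses all of this into two sentences.
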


\begin{proof} Assume the contrary. Then for any $h\in H$ we have $h|_{M}=S^2(h)|_{M}$. Hence the same holds for any tensor product of copies of $M$ and $M^*$. But since $M$ tensor generates $\Rep(H)$, some such product contains the regular representation of $H$, which implies that $S^2=1$ on $H$, a contradiction. 
\end{proof} 

Recall that a Hopf algebra $H$ is called simple if it has no nontrivial Hopf quotients. 

\begin{lemma}\label{no2} Let $H$ be a simple finite dimensional Hopf algebra 
over an algebraically closed field $\k$ of characteristic $\ne 2$ with $S^2\ne 1$.
Let $V$ be an irreducible 2-dimensional $H$-module such that $V\cong V^{**}$. 
Then $S^4\ne 1$. In particular, the distinguished grouplike element of $H$
is nontrivial. 
\end{lemma}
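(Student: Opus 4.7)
The plan is to argue by contradiction: assume $S^4=1$ and derive a contradiction with either $S^2\ne 1$ or the simplicity of $H$. The ``in particular'' clause about the distinguished grouplike $\gamma$ then follows at once from Radford's $S^4$ formula $S^4=\Ad(\gamma)$, since $\gamma=1$ would force $S^4=1$.

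I would begin by showing that $V$ tensor generates $\Rep(H)$: the full tensor subcategory of $\Rep(H)$ generated by $V$ and $V^*$ corresponds by Tannakian reconstruction to a Hopf quotient $H\twoheadrightarrow H'$, and simplicity of $H$ together with $V$ being nontrivial forces $H'=H$. Lemma \ref{nontri}, whose hypothesis $S^2\ne 1$ is part of our assumptions, then guarantees that $a:V\to V^{**}$, viewed via the canonical identification $V^{**}\cong V$ as a linear map on $V$, is not a scalar.

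Next I would exploit $S^4=1$: it makes the canonical identification $V^{****}=V$ into an $H$-module isomorphism, so $a^{**}\circ a\in\End_H(V)=\k\cdot\id_V$ by Schur's lemma. Since $a^{**}\circ a=a^2$ as a linear map on $V$, we get $a^2=c\cdot I$ for some $c\in\k^\times$. In characteristic $\ne 2$ with $a$ nonscalar, $a$ then has two distinct eigenvalues $\pm\sqrt{c}$, and after rescaling $a$ we may take $a=\diag(1,-1)$ in some basis of $V$. In particular, $\rho(S^2(h))=a\rho(h)a^{-1}$ on $\rho(H)=M_2(\k)$ is the ``checkerboard'' involution fixing diagonal and negating off-diagonal matrices.

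The hard part is extracting a contradiction from this eigenstructure; my plan is two-pronged. First, Radford's formula makes $\gamma$ central under $S^4=1$; if $\gamma\ne 1$, then $\k[\gamma]$ is a nontrivial central (hence normal) Hopf subalgebra and Nichols--Zoeller produces a proper Hopf quotient of $H$, contradicting simplicity unless $H=\k[\gamma]$ is a group algebra---but then $S^2=1$, contradicting our hypothesis. So one is reduced to the surviving case $\gamma=1$. In this case I would apply Lemma \ref{commu}: in the chosen basis the operator $a\otimes(a^*)^{-1}$ on $V\otimes V^*$ has eigenvalues $(+1,-1,-1,+1)$ and commutes with the module endomorphism $\Phi_a=\coev\circ\ev\circ(a\otimes 1)$, which is rank-one and in fact nilpotent ($\Phi_a^2=0$). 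Combining this with the $H$-module decomposition $V\otimes V^*=\k\cdot\id_V\oplus\mathfrak{sl}(V)$ (valid in characteristic $\ne 2$), the tensor generation by $V$, and the intertwining $\rho\circ S^2=\Ad(a)\circ\rho$, I expect to force $S^2$ to act trivially on $\rho(H)=M_2(\k)$, contradicting the non-scalar-ness of $a$ and thus finishing the argument. Matching the eigenspaces of $a\otimes(a^*)^{-1}$ precisely against the $H$-submodule structure of $V\otimes V^*$ is the delicate step I anticipate to be the main obstacle.
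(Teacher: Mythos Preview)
Your setup through ``rescale so that $a=\diag(1,-1)$'' is correct and matches the paper. The gap is in the endgame: the claimed ``$H$-module decomposition $V\otimes V^*=\k\cdot\id_V\oplus\mathfrak{sl}(V)$'' is \emph{false} here, and in fact its failure is the crux of the argument. The only nonzero $H$-map $V\otimes V^*\to\mathbf 1$ is (up to scalar) $\ev_{V^*}\circ(a\otimes 1)$, and its composite with $\coev_V$ is $\Tr(a)=0$; so the copy of $\mathbf 1$ inside $V\otimes V^*$ does \emph{not} split off. (Equivalently, the naive trace $v\otimes f\mapsto f(v)$ is not $H$-linear when $S^2\ne 1$, so $\mathfrak{sl}(V)$ is not an $H$-submodule.) Consequently your planned comparison of eigenspaces of $a\otimes(a^*)^{-1}$ against a direct-sum $H$-module structure on $V\otimes V^*$ cannot be carried out.

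The paper instead uses simplicity to rule out nontrivial $1$-dimensional modules and to get $\ext^1(\k,\k)=0$; together with $\Tr(a)=0$ this forces $V\otimes V^*$ to be uniserial with Loewy series $\mathbf 1,\,W,\,\mathbf 1$, where $W$ is a \emph{simple} $2$-dimensional module. Now Lemma~\ref{commu} (which you cite) says $a\otimes(a^*)^{-1}$ commutes with the rank-one nilpotent $\Phi_a$; since $\Phi_a$ carries the top $\mathbf 1$ to the bottom $\mathbf 1$, the eigenvalues of $a\otimes(a^*)^{-1}$ on the two $\mathbf 1$'s agree (equal to $1$), so on the subquotient $W$ its trace is $0-1-1=-2$, hence it acts as $-1$ on $W$. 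Thus the isomorphism $W\to W^{**}$ is a scalar; but $W$, being a nontrivial simple, also tensor-generates $\Rep(H)$ by simplicity, and Lemma~\ref{nontri} applied to $W$ gives the contradiction. Your first prong (the case $\gamma\ne 1$ under $S^4=1$) is fine but unnecessary: the argument above does not split on $\gamma$. The ``in particular'' clause is as you say.
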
 

\begin{proof} Since $H$ is simple and has a 2-dimensional simple module, 
$H$ cannot have nontrivial 1-dimensional modules: otherwise $H$ would have a nontrivial commutative 
Hopf quotient. For the same reason, 
${\rm Ext}^1(\k,\k)=0$, since otherwise $H$ would have a 
nontrivial local Hopf quotient. Also it is clear that $V$ tensor generates $\Rep(H)$. 

Assume the contrary, i.e. that $S^4=1$. Pick an isomorphism $a: V\to V^{**}$ such that $a^2=1$ (this can be done since $S^4=1$). Then $a$ is semisimple with eigenvalues $\pm 1$ (as ${\rm char}(\k)\ne 2$) 
and not a scalar by Lemma \ref{nontri}, so 
${\rm Tr}(a)=0$. Since $H$ has no nontrivial 1-dimensional modules and ${\rm Ext}_H^1(\k,\k)=0$, 
this implies that the Loewy series of $V\otimes V^*$ must be $\bold 1,W,\bold 1$, where 
$W$ is an irreducible 2-dimensional module. By Lemma \ref{commu}, the 
morphism $a\otimes (a^*)^{-1}$ commutes with the composition of $a\otimes 1$, evaluation and coevaluation
$V\otimes V^*\to V\otimes V^*$. Hence the trace of the block of the morphism $a\otimes (a^*)^{-1}$ on the constituent $W$ 
is $-2$, which implies that $a\otimes (a^*)^{-1}$ must act by $=-1$ on $W$. But since $H$ is simple, 
$W$ must tensor generate $H$, which gives a contradiction
with Lemma \ref{nontri}. 
\end{proof}

\begin{example} Let $n\ge 3$, $q$ be a primitive $n$-th root of unity, and 
$H=\mathfrak{u}_q(\mathfrak{sl}_2)$ be the corresponding small quantum group 
over $\Bbb C$, generated by a grouplike element $K$ and skew-primitive 
elements $E$, $F$ with $K^n=1$, $E^n=0$, $F^n=0$ and the usual commutation relations
$KE=q^2EK$, $KF=q^{-2}EK$, $[E,F]=\frac{K-K^{-1}}{q-q^{-1}}$. 
This Hopf algebra has a 2-dimensional tautological representation
$V$, which is simple, self-dual and tensor generates $\Rep(H)$, 
Also $S^2(a)=KaK^{-1}$, so the order of $S^2$ is 
the order of $q^2$, which is $n$ for odd $n$ and $n/2$ for even $n$. 
Thus $S^4=1$ for $n=4$. However, $H$ is simple if and only 
if $n$ is odd: otherwise it has a 1-dimensional representation $\psi$ 
such that $\psi(E),\psi(F)=0$, $\psi(K)=-1$. So the only reason the proof 
of Lemma \ref{no2} fails for $n=4$ is that the 
representation $W$ is not irreducible but rather isomorphic to $\psi\oplus \psi$
and therefore does not tensor generate $\Rep(H)$. 
\end{example} 

\begin{lemma}\label{tensprodproj} 
Let $X$ be a simple object of a finite tensor category $\C$, such that $X\otimes X^*$ is projective. 
Then $X$ is projective. 
\end{lemma}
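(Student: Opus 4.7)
My plan is to give a short argument using the rigidity of $\C$ and the fact that projectives form a two-sided tensor ideal in any finite tensor category.

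The key observation is that $X$ is a direct summand of $X\otimes X^*\otimes X$. Indeed, by the triangle identity for duality, the composition
$$X\xrightarrow{\,\mathrm{coev}_X\otimes\mathrm{id}_X\,}X\otimes X^*\otimes X\xrightarrow{\,\mathrm{id}_X\otimes\mathrm{ev}_X\,}X$$
is the identity on $X$, so the first map is a split monomorphism. Thus $X$ is a direct summand of $(X\otimes X^*)\otimes X$.

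Now I would invoke the standard fact (\cite{EGNO}, Proposition 4.2.12) that projective objects in a finite tensor category form a two-sided tensor ideal: if $P$ is projective and $Y$ is any object, then $P\otimes Y$ is projective. Applied to $P=X\otimes X^*$ and $Y=X$, this shows that $X\otimes X^*\otimes X$ is projective. Since a direct summand of a projective object is projective, we conclude that $X$ is projective.

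There is no real obstacle here; simplicity of $X$ is not even used in this argument, and the only nontrivial input is that projectives form a tensor ideal, which is available in any finite tensor category. (Note however that the statement would fail without rigidity, since one genuinely needs the triangle identity to produce the splitting $X\hookrightarrow X\otimes X^*\otimes X$.)
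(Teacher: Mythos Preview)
Your proof is correct and is in fact shorter and more direct than the paper's. You use the triangle identity to exhibit $X$ as a retract of $(X\otimes X^*)\otimes X$, then invoke the tensor-ideal property of projectives; as you note, simplicity of $X$ plays no role. The paper instead argues by contradiction: assuming $X$ is not projective, it first splits the surjection $P_X\otimes X^*\twoheadrightarrow X\otimes X^*$ (from the head of $P_X$) to deduce $\Hom(X,P_X)\ne 0$, hence $P_X$ is also the injective hull of $X$; then it uses both the head and socle copies of $X$ in $P_X$ to extract \emph{two} copies of $X\otimes X^*$ as direct summands of $P_X\otimes X^*$, forcing $\dim\Hom(X,P_X)\ge 2$ and contradicting the simplicity of the socle. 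Your argument avoids the head/socle analysis and the projective$=$injective input entirely, at the cost of citing the (standard) tensor-ideal fact. Both are valid, but yours is the cleaner route for this particular lemma.
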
 

\begin{proof} Assume the contrary. Then 
we have 
$$
P_X\otimes X^*=X\otimes X^*\oplus Y.
$$ 
Hence $\Hom(X,P_X)=\Hom(\bold 1,P_X\otimes X^*)\ne 0$. 
Thus $P_X$ is the injective hull of $X$, i.e. the socle of $P_X$ is $X$. 
Hence $P_X\otimes X^*=2X\otimes X^*\oplus Z$, so 
$\dim\Hom(X,P_X)=\dim\Hom(\bold 1,P_X,\otimes X^*)\ge 2$, a contradiction. 
\end{proof} 

\section{Finite dimensional quasi-Hopf algebras with two simple modules}

In this section we consider integral finite tensor categories of rank $r=2$ unless otherwise specified.

\subsection{The lower bound for the Frobenius-Perron dimension of a $\Bbb Z_+$-generator}
For $r=2$ the $\Bbb Z_+$-ring $A$ is $\Bbb Z_+$-generated by the nontrivial basis element $X$, so we may take $d={\rm FPdim}(X)$. Theorem \ref{bound} then gives the inequality $d\ge N/2$. More explicitly, let $b_0=1,b_1=X$. We have $X^2=aX+b$ for some $a\ge 0$, $b\ge 1$. Hence $d^2=ad+b$. It is easy to see that $p_0=d-a$ and $p_1=1$, so ${\rm FPdim}(A)=p_0+p_1d=2d-a$. Thus we get 

\begin{lemma}\label{l1} Let $A$ be a transitive $\Bbb Z_+$-ring of rank $2$, and let the dimension of the nontrivial basis element $X$ be $d$. Then 
$$
N-1\ge d\ge N/2,
$$ 
where $N={\rm FPdim}(A)$. 
\end{lemma}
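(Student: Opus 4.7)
The plan is to read both inequalities directly off the explicit computation contained in the paragraph preceding the lemma. Transitivity and the $\Bbb Z_+$-structure force $X^2 = aX + b$ with $a,b \in \Bbb Z_+$ and moreover $b \ge 1$ (transitivity applied to $j=k=1$ demands some $i$ with $N_{i1}^0 > 0$, which can only be $i=1$); applying ${\rm FPdim}$ yields the quadratic $d^2 = ad + b$. A short computation with the $2\times 2$ multiplication matrix $N_1$ shows that the left Perron eigenvector is proportional to $(d-a,\,1)$, and since $p_1=1$ makes the $\gcd$ condition automatic while $d, a \in \Bbb Z$ makes $d-a$ an integer, this is already the canonical normalization. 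Hence $p_0 = d-a$, $p_1 = 1$, and
$$N = p_0 + p_1 d = 2d - a.$$

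From the identity $N = 2d - a$ together with $a \ge 0$, the lower bound $d \ge N/2$ is immediate.

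For the upper bound $d \le N-1$, I rewrite it as $N - d \ge 1$, i.e., $p_0 = d - a \ge 1$. It suffices to check that $p_0$ is a positive integer: positivity is the Frobenius-Perron theorem, and $p_0 \in \Bbb Z$ holds because $d \in \Bbb Z$ (integrality of $A$) and $a \in \Bbb Z_+$. Equivalently, one can read off $d\, p_0 = b \ge 1$ from the quadratic, which forces the integer $p_0$ to be at least $1$.

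There is essentially no obstacle; the whole content of the lemma is that in rank $2$ the Perron eigenvector has the explicit form $(d-a,\,1)$, so the canonical normalization and the resulting formula $N = 2d - a$ produce both bounds by direct substitution, with the only genuine input being that $p_0$ is automatically a positive integer.
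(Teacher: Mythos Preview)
Your proof is correct and follows essentially the same approach as the paper: the paragraph preceding the lemma computes $p_0=d-a$, $p_1=1$, and $N=2d-a$, from which both bounds are immediate exactly as you argue. The paper additionally cites Theorem~\ref{bound} for the lower bound $d\ge N/2$, but then says ``More explicitly'' and gives the same direct computation you carry out, so there is no substantive difference.
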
 

\subsection{The case of large characteristic}
Now let $\C=\Rep H$ be the representation category of a quasi-Hopf algebra $H$ over $\k$ with two simple modules. 
If $\C$ is semisimple, it is easy to see that $\C={\rm Vec}(\Bbb Z/2,\omega)$ is the category of $\Bbb Z/2$-graded vector spaces twisted by a $3$-cocycle $\omega$. So from now till the end of the section we assume that $\C$ is non-semisimple. 

Let $C$ be the Cartan matrix of $\C$. 

\begin{lemma}\label{detbound} $|\det C|\le \frac{{\rm FPdim}(\C)^2}{4d^2}$. 
\end{lemma}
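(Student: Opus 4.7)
The plan is to parameterize the $2\times 2$ Cartan matrix explicitly and reduce the bound to elementary arithmetic, using only non-negativity of its entries together with a single application of the AM--GM inequality. Label the simples so that $X_0=\mathbf{1}$ and $X_1=X$ (with $\dim X=d$), and write $C=\begin{pmatrix}a & b \\ c & e\end{pmatrix}$, where $a,b,c,e\in\Bbb Z_{\ge 0}$ record the composition multiplicities $[P_i:X_j]$ of the indecomposable projectives $P_0,P_1$. The formulas $\dim P_i=\sum_j C_{ij}\dim X_j$ give $\dim P_0=a+bd$ and $\dim P_1=c+ed$, and summing yields
$$\dim H=\sum_i (\dim X_i)(\dim P_i)=\dim P_0+d\cdot \dim P_1={\rm FPdim}(\C).$$

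The first step is to bound $|\det C|=|ae-bc|\le ae+bc$, which is immediate since $a,b,c,e\ge 0$. The second step is to complete this sum to a product: since $ac/d$ and $bed$ are also non-negative,
$$|\det C|\le ae+bc\le ae+bc+\frac{ac}{d}+bed=\frac{(a+bd)(c+ed)}{d}=\frac{(\dim P_0)(\dim P_1)}{d}.$$
The third step is to apply AM--GM to the identity $\dim H=\dim P_0+d\cdot \dim P_1$, giving $4d\cdot (\dim P_0)(\dim P_1)\le (\dim H)^2$; dividing by $4d^2$ yields the claimed bound $|\det C|\le (\dim H)^2/(4d^2)$.

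I do not anticipate a serious obstacle: the argument is purely arithmetic, and the only point that requires verification is that all four Cartan entries are genuinely non-negative, so that the completion step (adding $ac/d+bed$) is legitimate. This is automatic, as the $C_{ij}$ are composition multiplicities. Note that the inequality is an equality precisely when $ac=bed=0$ and $ae=bc$ and $\dim P_0=d\cdot \dim P_1$, so the argument is essentially tight and one should not expect to extract a sharper bound without additional input (such as the Nakayama permutation or a reciprocity between $C$ and $C^T$).
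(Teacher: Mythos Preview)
Your proof is correct and follows essentially the same approach as the paper: both bound $|\det C|$ using only non-negativity of the Cartan entries together with AM--GM, and both arrive at the bound via the identity ${\rm FPdim}(\C)=c_{00}+(c_{01}+c_{10})d+c_{11}d^2$. The only cosmetic difference is that the paper first bounds $|\det C|\le\max(c_{00}c_{11},c_{10}c_{01})$ and applies AM--GM to each term separately, whereas you bound $|\det C|\le c_{00}c_{11}+c_{10}c_{01}$, complete to the product $(\dim P_0)(\dim P_1)/d$, and apply AM--GM once; the underlying idea is the same.
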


\begin{proof} Using the arithmetic and geometric mean inequality, we have 
$$
|\det C|=|c_{00}c_{11}-c_{10}c_{01}|\le {\rm max}(c_{00}c_{11},c_{10}c_{01})\le 
$$
$$
\le \frac{1}{4d^2}{\rm max}((c_{00}+c_{11}d^2)^2,(c_{10}+c_{01})^2d^2)\le 
$$
$$
\le \frac{1}{4d^2}(c_{00}+(c_{01}+c_{10})d+c_{11}d^2)^2=\frac{{\rm FPdim}(\C)^2}{4d^2}. 
$$
\end{proof} 

Let $A$ be the Grothendieck ring of $\C$. 

\begin{proposition}\label{largechar} Suppose that the characteristic of $\k$ is $0$ 
or \linebreak $p>\frac{{\rm FPdim}(\C)^2}{4d^2}$, $\C$ is not pointed, and 
${\rm Id}\cong **$ as an additive functor.  
Then ${\rm FPdim}(\C)=m{\rm FPdim}(A)^2$, where $m>1$ is an integer. In particular, 
${\rm FPdim}(\C)$ cannot be square free. 
\end{proposition}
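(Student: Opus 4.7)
The plan is to translate the hypotheses into strong symmetries of the Cartan matrix $C=(c_{ij})$ of $\C$ and extract the required divisibility from $\det C$.

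Since $\C$ is rank $2$ and not pointed, the nontrivial simple $X$ has $d:={\rm FPdim}(X)\ge 2$ and is self-dual ($X^*\cong X$, as $X$ is the only simple of FP dimension $\ne 1$). The absence of nontrivial invertible simples also forces the distinguished invertible object of $\C$ to be $\mathbf{1}$, so $H$ is unimodular and its Nakayama automorphism coincides with $S^2$. The hypothesis ${\rm Id}\cong **$ gives $X_i^{**}\cong X_i$ for every simple, so the Nakayama permutation of the indecomposable projectives is trivial; hence $C$ is symmetric, $c_{01}=c_{10}$. Writing $n:={\rm FPdim}(A)=2d-a$ and $D:=\gcd(\dim P_0,\dim P_1)$, so that $\dim P_0=D(n-d)$, $\dim P_1=D$, and ${\rm FPdim}(\C)=Dn$, the desired conclusion reduces to showing $n\mid D$ with $D/n\ge 2$.

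The relations $\dim P_j=\sum_i c_{ij}d_i$ read
$$c_{00}+c_{01}d=D(n-d),\qquad c_{01}+c_{11}d=D,$$
and substituting into $\det C=c_{00}c_{11}-c_{01}^2$ yields after a short computation
$$\det C=D(c_{11}n-D).$$
By Lemma~\ref{detbound}, $|\det C|\le {\rm FPdim}(\C)^2/(4d^2)=D^2n^2/(4d^2)$, which the characteristic hypothesis makes strictly less than $p$; hence $\det C\ne 0$ in $\k$, and in particular $c_{11}n\ne D$ as integers.

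The principal obstacle is upgrading the non-vanishing $c_{11}n\ne D$ to the full divisibility $n\mid D$: the symmetric Cartan relations together with the determinant bound alone do not force this (one can exhibit symmetric nonnegative-integer $2\times 2$ matrices consistent with all the above and violating $n\mid D$), so an additional categorical input is required. The natural candidate is Lemma~\ref{commu}: choosing an isomorphism $a:X\to X^{**}$, the element $a\otimes(a^*)^{-1}\in\End(X\otimes X^*)$ commutes with the endomorphism $\pi:={\rm coev}\circ{\rm ev}$ of $X\otimes X^*$ (which satisfies $\pi^2=d\pi$), and combined with the known structure of $X\otimes X^*$ this should yield an extra integrality constraint on $c_{01}$ that, together with $c_{01}+c_{11}d=D$, forces $n\mid D$. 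Once $m:=D/n$ is known to be a positive integer, $m\ge 2$ follows because $X$ cannot be projective (otherwise $D=d<n$ would give $m<1$, contradicting $m\in\Bbb Z_{>0}$), so by the Frobenius property (Proposition~\ref{p1}) $D=\dim P_1\ge 2d>n=2d-a$ whenever $a>0$; the edge case $a=0$ (where $n=2d$ and $[X\otimes X]=d^2[\mathbf{1}]$ in $K_0$) must be excluded separately to rule out $D=n$.
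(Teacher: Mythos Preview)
Your setup is sound and the formula $\det C = D(c_{11}n-D)$ is correct and useful, but the crucial step is inverted. From $|\det C|\le \FPdim(\C)^2/(4d^2)<p$ you conclude ``$\det C\ne 0$ in $\k$, and in particular $c_{11}n\ne D$.'' That deduction is unjustified: the bound only says that \emph{if} $\det C\ne 0$ as an integer then it is a unit mod $p$; it says nothing against $\det C=0$. In fact the paper wants precisely the opposite conclusion. The missing ingredient is \cite[Theorem~6.6.1]{EGNO}, which (for a non-semisimple finite tensor category with ${\rm Id}\cong **$ as additive functors) forces $\det C$ to vanish in $\k$: in characteristic $0$ this means $\det C=0$ outright, and in characteristic $p$ it means $p\mid \det C$, which together with Lemma~\ref{detbound} and the hypothesis $p>\FPdim(\C)^2/(4d^2)$ gives $\det C=0$ as an integer. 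Once $\det C=0$, your own formula yields $c_{11}n=D$, hence $n\mid D$ and $m:=D/n=c_{11}$; equivalently, $P_0$ and $P_1$ become proportional in $K_0(\C)$, so $P_1=m\bigl((d-a)\mathbf 1+X\bigr)$ and $\FPdim(\C)=m(2d-a)^2=m\,\FPdim(A)^2$. Your proposed detour through Lemma~\ref{commu} is neither needed nor likely to produce the divisibility.

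For $m>1$, your argument is close but incomplete at $a=0$, and the paper's is shorter and uniform: if $m=1$ then $[P_1:X]=c_{11}=1$; since $\C$ is not pointed it is unimodular, so both the head and the socle of $P_1$ equal $X$, which with $[P_1:X]=1$ forces $P_1=X$. But then $[P_1:\mathbf 1]=d-a=0$, contradicting $p_0=d-a\ge 1$. This handles all $a$ at once.
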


\begin{proof} By \cite{EGNO}, Theorem 6.6.1 and Lemma \ref{detbound}, $P_0$ and $P_1$ must be proportional in the Grothendieck ring
of $\C$, so $P_0=(d-a)P_1$. This implies that $P_1=m((d-a)\bold 1+X)$, where $m$ is a positive integer. 
Hence ${\rm FPdim}(\C)=\dim P_0+d\dim P_1=m(2d-a)^2$. 

Assume that $m=1$. Since $\C$ is not pointed, it is unimodular. Also we have 
$[P_1:X]=1$. By the Frobenius property of $\C$ this implies that $P_1=X$, so 
$d-a=0$, a contradiction. Thus, $m>1$, as desired. 
\end{proof} 

\begin{remark} 1. Note that the statement that $m>1$ in Proposition \ref{largechar} is false in the pointed case, e.g. 
for the representation category of Sweedler's 4-dimensional Hopf algebra.

2. Pointed finite tensor categories with two simple objects in characteristic zero 
are classified in \cite{EG2}. They are either $\Rep H$ where $H$ is a Nichols Hopf algebra of dimension $2^n$ (\cite{N}) or exceptional quasi-Hopf algebras of dimension $8$ or $32$. 
\end{remark} 

\subsection{Arbitrary characteristic} 

\begin{corollary}\label{proje} In any characteristic one has ${\rm FPdim}(\C)\ge d\cdot {\rm FPdim}(A)$, i.e., $D\ge d$, with the equality if and only if $X$ is projective. In particular, ${\rm FPdim}(\C) \ge \frac{1}{2}{\rm FPdim}(A)^2$. Moreover, if $X$ is not projective then 
$D\ge 2d$ and ${\rm FPdim}(\C) \ge {\rm FPdim}(A)^2$. 
\end{corollary}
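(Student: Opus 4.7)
The plan is to combine three ingredients already established in the paper: Proposition \ref{gcd}, Proposition \ref{p1}, and Lemma \ref{l1}. In the rank-$2$ situation of this section, $A$ has basis $\mathbf{1}, X$ with $p_0 = d-a$ and $p_1 = 1$ (as recorded before Lemma \ref{l1}), so canonical normalization gives $p_1 = 1$, which is what makes the argument especially clean.

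First I would apply Proposition \ref{p1} to the simple object $X$: since $p_1 = 1$, the inequality $p_1 D \ge d_1$ specializes to $D \ge d$, with equality precisely when $X$ is projective, and otherwise $D \ge 2d$. Feeding this into Proposition \ref{gcd}, which gives ${\rm FPdim}(\C) = D \cdot {\rm FPdim}(A)$, I would immediately deduce
\[
{\rm FPdim}(\C) \;\ge\; d \cdot {\rm FPdim}(A),
\]
with equality iff $X$ is projective, and ${\rm FPdim}(\C) \ge 2d \cdot {\rm FPdim}(A)$ if $X$ is not projective.

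For the two ``in particular'' statements I would invoke Lemma \ref{l1}, which gives $d \ge {\rm FPdim}(A)/2$. Substituting this into the bounds above yields
\[
{\rm FPdim}(\C) \;\ge\; d \cdot {\rm FPdim}(A) \;\ge\; \tfrac{1}{2}{\rm FPdim}(A)^2
\]
in general, and ${\rm FPdim}(\C) \ge {\rm FPdim}(A)^2$ when $X$ is not projective.

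There is no genuine obstacle here: the corollary is really just a rank-$2$ specialization of the general inequality $p_i D \ge d_i$ combined with the rank-$2$ bound $d \ge N/2$. The only point worth double-checking is that $p_1 = 1$ in the canonical normalization, which one sees directly from $\mathbf{p} N_1 = d \mathbf{p}$ together with $\gcd(p_0, p_1) = 1$ (the computation preceding Lemma \ref{l1} gives $p_0 = d - a$, $p_1 = 1$, so $\gcd(p_0,p_1)=1$ automatically).
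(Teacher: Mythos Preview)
Your proof is correct and follows exactly the approach of the paper, which simply says ``This follows from Proposition~\ref{p1}, since $p_1=1$.'' You have spelled out the details the paper leaves implicit (the use of Proposition~\ref{gcd} to translate $D\ge d$ into ${\rm FPdim}(\C)\ge d\cdot{\rm FPdim}(A)$, and the use of Lemma~\ref{l1} for the ``in particular'' statements), but the argument is the same.
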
  

\begin{proof} This follows from Proposition \ref{p1}, since $p_1=1$. 
\end{proof} 

\subsection{Minimal categories} 
Let us say that $\C$ is {\it minimal} if $X$ is projective. 

\begin{proposition}\label{mini} Suppose $\C$ is minimal and $d>1$. Then $d=p^r$ where $p={\rm char}(\k)$, and $a=p^r-p^s$ for some $s\le r$, so ${\rm FPdim}(A)=p^{s}(p^{r-s}+1)$, ${\rm FPdim}(\C)=p^{r+s}(p^{r-s}+1)$, $D=p^r$ and $\dim P_0=b=p^{r+s}$. 
\end{proposition}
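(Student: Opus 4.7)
The plan is to analyze the structure of the projective cover $P_0$ of $\bold 1$ using the projectivity of $X$, and then recognize $P_0$ as living in a unipotent tensor subcategory so that Example \ref{unip} applies.

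First I fix the combinatorial data. From $X^2=aX+b$ in $A$ one gets $d^2=ad+b$, so $b=d(d-a)$; Frobenius--Perron gives $p_0=d-a$, $p_1=1$, hence $\FPdim(A)=2d-a$. Since $X$ is the only non-trivial simple, dimensions rule out $X^*=\bold 1$, so $X^*\cong X$. The absence of non-trivial invertible objects (both simples are accounted for and $d>1$) makes $\C$ unimodular. Proposition \ref{p1} with $p_1=1$ gives $D\ge d$, while $D\mid \dim P_1=d$, so $D=d$. Proposition \ref{gcd} together with $\dim H=\dim P_0+d^2$ then yields $\dim P_0=d(2d-a)-d^2=d(d-a)=b$.

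Next I decompose $X\otimes X$. Since $X$ is projective, so is $X\otimes X$, and I write $X\otimes X=\alpha P_0\oplus\beta X$ in $\C$. Comparing with $[X^2]=b\cdot\bold 1+aX$ gives $\alpha\,[P_0:\bold 1]=b$ and $\alpha\,[P_0:X]+\beta=a$. To pin down $\alpha$ I apply $\Hom(\bold 1,-)$: by rigidity and $X\cong X^*$,
$$
\Hom(\bold 1, X\otimes X)=\Hom(X^*,X)=\Hom(X,X)=\k,
$$
while $\Hom(\bold 1,X)=0$ and, by unimodularity, $\soc P_0=\bold 1$, so $\Hom(\bold 1,P_0)=\k$. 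Hence $\alpha=1$, and therefore $[P_0:\bold 1]=b$. Combined with $\dim P_0=b$ this forces $[P_0:X]=0$; i.e., every composition factor of $P_0$ is $\bold 1$.

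This is the decisive step: it puts $P_0$ into the Serre tensor subcategory $\C_{\mathrm{uni}}\subseteq\C$ whose objects have all composition factors equal to $\bold 1$. Since projectivity in $\C$ implies projectivity in any Serre subcategory containing it, $P_0$ is the projective cover of $\bold 1$ in $\C_{\mathrm{uni}}$, making $\C_{\mathrm{uni}}$ a finite unipotent tensor category with $\FPdim(\C_{\mathrm{uni}})=\dim P_0=b$. Example \ref{unip} then forces $b$ to be a power of $p=\mathrm{char}(\k)$. Since $b=d(d-a)$ is a prime power with $d$ and $d-a$ as positive integer divisors, both $d$ and $d-a$ are themselves powers of $p$. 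Writing $d=p^r$ and $d-a=p^s$, the inequality $d-a\le d$ gives $s\le r$ (and $r\ge 1$ since $d>1$), and the claimed formulas for $a$, $b$, $\FPdim(A)$, $\FPdim(\C)$, and $D$ follow by direct substitution. The main obstacle is the equality $[P_0:X]=0$; once the socle/unimodularity computation yields $\alpha=1$, the already-established identity $\dim P_0=b$ closes the loop and brings Example \ref{unip} to bear.
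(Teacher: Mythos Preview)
Your proof is correct and follows essentially the same route as the paper: show that $P_0$ has only $\bold 1$ as composition factors, identify it with the projective cover of $\bold 1$ in the unipotent subcategory, and invoke Example \ref{unip} to conclude that $\dim P_0=b$ is a prime power. The paper's proof simply asserts $X^2=aX\oplus P_0$ with $P_0$ an iterated self-extension of $\bold 1$, whereas you carefully justify the multiplicity $\alpha=1$ via the $\Hom(\bold 1,-)$ computation and unimodularity; one could equally use $\Hom(-,\bold 1)$ and the head of $P_0$, avoiding the unimodularity step, but your argument is fine as written.
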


\begin{proof} We have $X^2=aX\oplus P_0$, and $P_0$ is an iterated extension of $\bold 1$, i.e. it coincides with the projective cover of $\bold 1$ in the unipotent subcategory of $\C$. This implies the statement, since the Frobenius-Perron dimension of any unipotent category is the power of the characteristic (Example \ref{unip}). 
\end{proof} 

\begin{corollary}\label{c1} Let $\C$ be a minimal category of Frobenius-Perron dimension $pn>2$ where 
$p$ is a prime and $p>n$. Then $p=n+1$ and $n$ is a power of $2$, i.e., 
$p=2^{2^m}+1$ is a Fermat prime. Moreover, ${\rm char}(\k)=2$, $d=p-1$ and $X^2=(p-2)X+p-1$. 
\end{corollary}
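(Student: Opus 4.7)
The plan is to apply Proposition \ref{mini} and then extract the conclusion by a short divisibility argument using the primality of $p$ together with the bound $p>n$. To avoid a notational clash I write $q:={\rm char}(\k)$ for the characteristic throughout.

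First I would check that Proposition \ref{mini} applies, i.e.\ that $d>1$: if instead $d=1$, then $X^2=\bold 1$ in the Grothendieck ring, so the projective object $X\otimes X$ has $\bold 1$ as its only composition factor and must equal $P_0$, forcing $\dim P_0=1$; but then $\bold 1$ is projective and $\C$ is semisimple, contradicting the standing non-semisimplicity assumption of this subsection. Thus Proposition \ref{mini} yields integers $0\le s\le r$ with $d=q^r$, $a=q^r-q^s$, $b=q^{r+s}$, and
$$
pn={\rm FPdim}(\C)=q^{r+s}(q^{r-s}+1).
$$

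The heart of the argument is a short case analysis. First I would rule out $p=q$: that case gives $n=q^{r+s-1}(q^{r-s}+1)$, and since $r\ge 1$ one checks $n\ge q+1>q=p$, contradicting $p>n$. So $\gcd(p,q)=1$, whence primality of $p$ forces $p\mid q^{r-s}+1$, while $q^{r+s}\mid n$. The two-sided bound
$$
q^{r+s}\le n<p\le q^{r-s}+1
$$
forces $q^{r+s}\le q^{r-s}$, i.e.\ $s=0$; then $q^r\le n<p\le q^r+1$ pins down $n=q^r$ and $p=q^r+1$.

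Finally, $p=q^r+1$ prime with $q$ a prime and $r\ge 1$ forces $q=2$ (else $p$ is even and $>2$), so ${\rm char}(\k)=2$; and $2^r+1$ prime forces $r$ to be a power of $2$, by the classical observation that if $r=ab$ with $b>1$ odd, then $2^a+1$ properly divides $2^r+1$. Hence $p=2^{2^m}+1$ is a Fermat prime. The identities $d=q^r=p-1$, $a=p-2$, $b=p-1$, and therefore $X^2=(p-2)X+(p-1)$, are read off directly from Proposition \ref{mini} with $s=0$. I expect no genuine obstacle here: the only real hazard is bookkeeping, namely keeping the prime $p$ of the corollary distinct from the characteristic $q$, and eliminating the case $p=q$ before invoking $\gcd(p,q)=1$.
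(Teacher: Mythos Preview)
Your proof is correct and follows essentially the same route as the paper: apply Proposition \ref{mini} to get $pn=q^{r+s}(q^{r-s}+1)$, then use $p>n$ and primality to force $s=0$, $n=q^r$, $p=q^r+1$, hence $q=2$ and $r$ a power of $2$. Your version is in fact more careful than the paper's terse argument, since you explicitly verify $d>1$ before invoking Proposition \ref{mini} and separately eliminate the case $p=q$; the paper leaves both of these implicit.
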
 

\begin{proof} By Proposition \ref{mini} we have $pn=q^{r+s}(q^{r-s}+1)$, where 
$q$ is a prime. Since $p>n$, we must have $s=0$, $n=q^r$ and $p=q^r+1$. Thus $q$ is a power of $2$ and $p$ is 
a Fermat prime, as desired. 
\end{proof} 

\subsection{Examples}

\begin{example}
Minimal categories as in Corollary \ref{c1} exist for every Fermat prime $p$. Namely, let $G={\rm Aff}(p)$ be the group of affine linear transformations of the field $\Bbb F_{p}$, then we can take $\C={\rm Rep}_\k(G)$. 
This shows that Proposition \ref{largechar} fails in small characteristic. 
\end{example}

\begin{example}
Consider the case $a=0$, i.e., $X^2=d^2$ for some positive integer $d$. We claim that for $d>1$ such a ring $A$ 
admits a categorification by a finite tensor category in characteristic $p$ if and only if $d$ is a power of $p$, and in particular it never admits such categorification in characteristic $0$. 

Indeed, let $\C$ be a finite tensor category with Grothendieck ring $A$. We may assume that $\C$ is tensor generated by $X$. Then $\C=\C_0\oplus \C_1$ is $\Bbb Z/2$-graded and ${\rm Ext}^1(\bold 1,X)={\rm Ext}^1(X,\bold 1)=0$. Thus $P_{\bold 1}$ is an iterated extension of $\bold 1$ and $\dim P_{\bold 1}={\rm FPdim}(\C_0)$ is a power of $p$, since $\C_0$ is unipotent (Example \ref{unip}). Also, $P_X$ is an iterated extension of $X$, and $X\otimes P_X=P_{\bold 1}$. 
This implies that $d^2[P_X:X]=p^m$ for some $m$, which implies that $d=p^s$ for some $s$. 

On the other hand, such categories exist for every $s$.  Indeed, it is enough to construct such a category $\C$ 
for $s=1$, then for any $s$ we can take the subcategory $\C_s$ in $\C^{\boxtimes s}$ generated by $X^{\boxtimes s}$. 
For $p=2$, an example of $\C$ is given in Example \ref{X2=4}. If $p>2$, let $H$ be the Hopf algebra $\Bbb Z/2\ltimes \k[x,y]/(x^p,y^p)$, where $x,y$ are primitive and the generator $g\in \Bbb Z/2$ acts by $gxg^{-1}=-x, gyg^{-1}=-y$. 
Let $J=\sum_{j=0}^{p-1}\frac{x^j\otimes y^j}{j!}$ be a twist  for $H$, and $H^J$ be the corresponding twisted triangular Hopf algebra. Then we can take $\C$ to be the category of comodules over $H^J$. 
\end{example}

\subsection{Categorifications of $X^2=X+d(d-1)$}\label{a=1} Suppose $a=1$, i.e., $X^2=X+d(d-1)$. Let $\C$ be a finite tensor category with Grothendieck ring $A$. Since $X$ is self-dual, this implies that any morphism $a: X\to X^{**}$ has zero trace. Indeed, otherwise $X\otimes X=\bold 1\oplus Y$ where $Y$ has only one copy of $X$ in the composition series. Thus $\Hom(Y,\bold 1)\ne 0$ 
(as $Y$ is self-dual), so $\dim\Hom(X\otimes X,\bold 1)\ge 2$, a contradiction. 

\begin{proposition}\label{chardiv} The characteristic of $\k$ is a prime $p$ dividing \linebreak $d(d-1)$. Moreover, if $H$ 
is a Hopf algebra then $p$ divides $d$.  
\end{proposition}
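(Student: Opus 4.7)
My plan is to read off both statements from the categorical trace vanishing $\tr(a)=0$ (for every iso $a\colon X\to X^{**}$) established in the paragraph just above, with the Hopf improvement coming from a direct $H$-module splitting argument.

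For the first assertion, the key observation is that categorical trace extends multiplicatively under tensor products: writing $a_{V\otimes W}=a_V\otimes a_W$ via the canonical $(V\otimes W)^{**}\cong V^{**}\otimes W^{**}$, we have $\tr(a_{V\otimes W})=\tr(a_V)\tr(a_W)$. In particular, on the object $X\otimes X$ the isomorphism $a\otimes a$ has categorical trace $\tr(a)^2=0$. On the other hand, the categorical trace is additive along composition series (with compatibly chosen isomorphisms on the simple constituents, taking $a_\bold 1=\id_\bold 1$), and the composition factors of $X\otimes X$ are $X$ once and $\bold 1$ with multiplicity $d(d-1)$, so the same trace equals $\tr(a)+d(d-1)\tr(\id_\bold 1)=0+d(d-1)=d(d-1)$ in $\k$. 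Equating the two expressions gives $d(d-1)=0$ in $\k$. Since $d\ge 2$ makes $d(d-1)\ge 2$ a nonzero integer, $\mathrm{char}(\k)$ must be a prime $p$ dividing $d(d-1)$.

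For the second assertion, I would argue directly in the Hopf case using the $H$-linearity of the ordinary matrix trace on $\End_\k(X)$. Since $[X^*]=[X]$ in $A$ and both are simple, $X\cong X^*$ as $H$-modules, so $X\otimes X\cong \End_\k(X)$ as $H$-modules with the standard adjoint action $(h\cdot\phi)(v)=h_{(1)}\phi(S(h_{(2)})v)$. For this action, the ordinary matrix trace $\tr\colon\End_\k(X)\to\k$ is $H$-linear (by cyclicity combined with $S(h_{(2)})h_{(1)}=\epsilon(h)$), and the scalar inclusion $\bold 1\hookrightarrow \End_\k(X)$ sending $\lambda\mapsto \lambda\id_X$ is $H$-linear (since $h\cdot\id_X=h_{(1)}S(h_{(2)})\id_X=\epsilon(h)\id_X$). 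Their composition is multiplication by $d$. If $p\nmid d$ this composition is invertible, exhibiting $\bold 1$ as a direct summand of $X\otimes X$---precisely the splitting that the paragraph preceding the proposition rules out. Hence $p\mid d$.

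The one delicate point is justifying the additivity of the categorical trace in Part 1 along the (possibly non-split) composition filtration of $X\otimes X$, since $\C$ is not assumed pivotal. I expect this is handled by choosing the isomorphisms $a_V\colon V\to V^{**}$ compatibly on each successive quotient in a filtration (a standard device in rigid finite tensor categories), but it is the one step that requires careful bookkeeping rather than a direct calculation.
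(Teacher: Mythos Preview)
Your Part 1 strategy is morally the paper's, but the step you flag as ``careful bookkeeping'' is in fact the entire content of the proof, and it does not follow from general nonsense. The ordinary linear-algebra trace of $a\otimes a$ is certainly additive along the Loewy filtration, but the scalar that $a\otimes a$ induces on each copy of $\bold 1$ in the composition series has no reason to be $1$. Declaring $a_{\bold 1}=\id_{\bold 1}$ does not force this: the induced map on a given subquotient $\bold 1$ is determined by $a\otimes a$, not by an independent choice. The paper needs two nontrivial inputs here: Lemma~\ref{loca} (for a local quasi-Hopf quotient, generalized eigenspaces of a map $M\to M^{**}$ are submodules, so the eigenvalue on the indecomposable iterated self-extensions $M$ and $N$ of $\bold 1$ is constant) and Lemma~\ref{commu} (the eigenvalues on $M$ and on $N$ coincide). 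Only after these does rescaling $a$ make all the $\bold 1$-contributions equal to $1$. Without them your equation $\tr(a\otimes a)=d(d-1)$ is unjustified.

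Your Part 2 contains a genuine error. The identity you invoke, $S(h_{(2)})h_{(1)}=\varepsilon(h)$, is not a Hopf algebra axiom; the antipode axioms give $S(h_{(1)})h_{(2)}=\varepsilon(h)=h_{(1)}S(h_{(2)})$, and your version holds only when $S^2$ acts trivially. (Check Sweedler's $H_4$: for $h=x$ one gets $S(x_{(2)})x_{(1)}=2x\neq 0$.) Consequently the linear trace $\End_\k(X)\to\k$ is \emph{not} $H$-linear in general. The only $H$-linear map $X\otimes X^*\to\bold 1$ (up to scalar) corresponds to a choice of isomorphism $a\colon X\to X^{**}$, and the composite with $\coev$ is then exactly the categorical trace of $a$, already known to vanish --- so this route yields nothing new. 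The paper instead argues combinatorially with the eigenvalues $a_1,\dots,a_d$ of $a$: since at least $d(d-1)$ of the products $a_ia_j$ equal $1$, one forces (for $d>2$) all $a_i$ to be $\pm 1$ with a common sign, whence $\tr_\k(a)=\pm d=0$ in $\k$.
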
 

\begin{proof} Consider the Loewy series of $X\otimes X$. Exactly one of its terms contains $X$. 
So we have a 3-step filtration of $X\otimes X$ with composition factors $M,X,N$ invariant under any automorphism of $X\otimes X$, where $M,N$ are indecomposable iterated self-extensions of $\bold 1$ (in fact, $M$ and $N^*$ are cyclic). 
The $H$-modules $M$ and $N$ factor through a local quasi-Hopf algebra $H'$. Therefore, by Lemma \ref{loca}, for any isomorphism \linebreak $a: X\to X^{**}$ the morphism $a\otimes a$ on $X\otimes X$ has only one eigenvalue on $M$ and only one on $N$. Moreover, these two coincide by Lemma \ref{commu}; we will normalize $a$ in such a way that this eigenvalue is $1$.
Then, computing the trace of $a\otimes a$, we get $d(d-1)=0$ in $\k$, as claimed. 

Now assume that $H$ is a Hopf algebra and let $a_1,...,a_d$ be the eigenvalues of $a$. We may assume that $d>2$. Then the eigenvalues of $a\otimes a$ are $a_ia_j$. At least $d(d-1)$ of these numbers are $1$, so at most $d$ of them are $\ne 1$. We claim that $a_i=1$ for all $i$ or $a_i=-1$ for all $i$. Indeed, if none of $a_i$ are $1$ or $-1$ then $a_i^2\ne 1$ for all $i$, so $a_ia_j=1$ for $i\ne j$, hence $a_i=a_j^{-1}$, which is impossible since $d>2$. So at least one $a_i$ is $1$ or $-1$, and we can assume it is $1$ by multiplying $a$ by $-1$ if needed. Let $n>0$ be the number of $i$ such that $a_i=1$. If $a_i=1,a_j\ne 1$ then $a_ia_j, a_ja_i\ne 1$, so $2n(d-n)\le d$. Thus for $d>2$ we get $n=d$, i.e. $a_i=1$ for all $i$. Thus, $d={\rm Tr}(a)=0$ in $\k$, as desired.  
\end{proof} 

\subsection{Categorifications of $X^2=X+2$}\label{X2=X+2}
Now consider the case $d=2$, i.e. the fusion rule $X^2=X+2$. 
 Let $\C_X$ be the tensor subcategory of $\C$ generated by $X$. 
 
 \begin{proposition}\label{tenpro} 
 One has $X\otimes X=P\oplus X$, where $P$ is a nontrivial extension of $\bold 1$ by $\bold 1$. 
 Moreover, $P$ and $X$ are the indecomposable projectives of $\C_X$, so ${\rm FPdim}(\C_X)=6$. 
 \end{proposition}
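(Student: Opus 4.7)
\noindent\textit{Proof proposal.} The plan is to decompose $X\ot X$ by first ruling out $\bold 1$ as a direct summand via the characteristic-$2$ obstruction, and then eliminating the indecomposable alternative using unimodularity of $\C$.

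First, by Proposition \ref{chardiv}, $\mathrm{char}(\k)=2$, so $\dim X=2=0$ in $\k$ and hence $\ev\circ\coev=0$. Since $\dim\Hom(\bold 1,X\ot X)=\dim\End(X)=1$ and similarly $\dim\Hom(X\ot X,\bold 1)=1$, the only nonzero morphisms $\bold 1\to X\ot X$ and $X\ot X\to\bold 1$ are scalar multiples of $\coev$ and $\ev$ respectively; a splitting of $\bold 1$ as a direct summand would force $\ev\circ\coev$ to be a nonzero scalar, a contradiction. Given that $X\ot X$ has composition factors $2\bold 1+X$, is self-dual (as $X\cong X^*$), and has no $\bold 1$-summand, a Krull--Schmidt enumeration leaves only two possibilities: (A) $X\ot X$ is indecomposable with Loewy layers $\bold 1,X,\bold 1$; or (B) $X\ot X=P\oplus X$, with $P$ the unique nontrivial self-extension of $\bold 1$.

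Next I would rule out (A). Since $\bold 1$ is the only invertible object of $\C$, the distinguished invertible is trivial and $\C$ is automatically unimodular, so every projective indecomposable is self-dual. In case (A), both $\ext^1(\bold 1,X)$ and $\ext^1(X,\bold 1)$ are nonzero, realized by the length-two subquotients $(X\ot X)/\soc(X\ot X)$ and $\ker(\ev)$. Writing $(a,b):=([P_{\bold 1}^{\C_X}:\bold 1],[P_{\bold 1}^{\C_X}:X])$ for the composition of the projective cover of $\bold 1$ in $\C_X$, the identity $P_{\bold 1}^{\C_X}\ot X\cong 2\,P_X^{\C_X}$ (from $\Hom(P_{\bold 1}^{\C_X}\ot X,\bold 1)=\Hom(P_{\bold 1}^{\C_X},X)=0$), together with Nakayama reciprocity $[P_X^{\C_X}:\bold 1]=b$ and the consequent $[P_X^{\C_X}:X]=(a+b)/2$, the palindromic Loewy series forced by $P_{\bold 1}^{\C_X}\cong(P_{\bold 1}^{\C_X})^*$, and the condition $\soc(P_{\bold 1}^{\C_X})=\bold 1$, severely constrain the admissible $(a,b)$. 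A case analysis of the surjection $P_{\bold 1}^{\C_X}\twoheadrightarrow X\ot X$ with kernel of composition $(a-2)\bold 1+(b-1)X$ then shows that every candidate either produces a simple $X$-submodule of $P_{\bold 1}^{\C_X}$, or forces the radical of $P_{\bold 1}^{\C_X}$ to decompose into several indecomposable summands each contributing a simple to the socle, in either case contradicting $\soc(P_{\bold 1}^{\C_X})=\bold 1$.

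Having excluded (A), we have $X\ot X=P\oplus X$. Then $X\ot X$ is a direct sum of projective indecomposables in $\C_X$: $P$ has head $\bold 1$ and composition $2\bold 1$, so it is the projective cover $P_{\bold 1}^{\C_X}$; and by Lemma \ref{tensprodproj} applied within $\C_X$ (using that $X\ot X=X\ot X^*$ is projective), $X$ is projective, so $X=P_X^{\C_X}$. The FPdim computation then yields $\FPdim(\C_X)=1\cdot\dim P+2\cdot\dim X=2+4=6$. The main obstacle is the case analysis in the second paragraph.
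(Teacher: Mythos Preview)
Your argument has two genuine gaps. The first, which you yourself flag, is the elimination of case (A). The ``case analysis'' you describe is not actually a finite case analysis: the pair $(a,b)=([P_{\bold 1}^{\C_X}:\bold 1],[P_{\bold 1}^{\C_X}:X])$ is not bounded by anything you have written down, and the relations $P_{\bold 1}^{\C_X}\ot X\cong 2P_X^{\C_X}$ and $[P_X^{\C_X}:X]=(a+b)/2$ only give parity and reciprocity constraints, not a finite list. Nothing prevents arbitrarily large uniserial $P_{\bold 1}^{\C_X}$ with alternating layers, so ``every candidate'' is an infinite set and your socle argument does not close. The paper avoids this entirely by a direct computation with $X^{\ot 3}$: assuming (A), one has $\dim\End(X\ot X)=2$, hence $\dim\Hom(X,X^{\ot 3})=2$; since $X^{\ot 3}$ is self-dual with $\Hom(X,X\ot X)=0$, this forces $X^{\ot 3}=2X\oplus (X\ot X)$, and inductively every $X^{\ot n}$ is a sum of copies of $X$ and $X\ot X$. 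But some tensor power contains a projective of $\C_X$, so $X\ot X$ is projective, hence so is its summand $X$ in $X^{\ot 3}$, contradicting that $X$ is not a summand of $X\ot X$.

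The second gap is in your last paragraph: you assert that ``$X\ot X$ is a direct sum of projective indecomposables in $\C_X$'' and then invoke Lemma \ref{tensprodproj} ``using that $X\ot X=X\ot X^*$ is projective''. But projectivity of $X\ot X$ is exactly what needs to be proved; knowing the decomposition $P\oplus X$ as modules says nothing about projectivity of the pieces. The paper closes this by computing $\dim\Hom(X,P\ot X)=\dim\Hom(X\ot X,P)=\dim\End(P)=2$, so $P\ot X=2X$; then again every $X^{\ot n}$ is a sum of copies of $P$ and $X$, and since the indecomposable projectives of $\C_X$ must occur among these summands, $P$ and $X$ are themselves the indecomposable projectives.
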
 
 
 \begin{proof} Let us first show that $X\otimes X=P\oplus X$. Assume the contrary. 
 Since $X\otimes X$ is self-dual and $\dim\Hom(\bold 1,X\otimes X)=1$, we get that 
 $X\otimes X$ must be indecomposable with Loewy series $\bold 1,X,\bold 1$. 
 Then $\dim\Hom(X,X\otimes X\otimes X)=\dim{\rm End}(X\otimes X)=2$. 
 But $X\otimes X\otimes X$ is self-dual, has composition series $X,X\otimes X,X$, and $\Hom(X,X\otimes X)=0$. 
 This means that $X\otimes X\otimes X=2X\oplus X\otimes X$. Thus for each $n$, $X^{\otimes n}$ 
 is a direct sum of copies of $X$ and $X\otimes X$. But for some $n$, the object $X^{\otimes n}$ 
 has a direct summand which is projective in $\C_X$. Hence $X\otimes X$ is projective. Therefore, so is $X$ 
 (as it is a direct summand in $X\otimes X\otimes X$). But then $X$ must be a direct summand in $X\otimes X$, a contradiction. 
 
 Note that $\dim\Hom(X,P\otimes X)=\dim\Hom(X\otimes X,P)=\dim\Hom(P,P)=2$. Thus, $P\otimes X=2X$. 
 Thus, for any $n$, $X^{\otimes n}$ is a direct sum of copies of $P$ and $X$. This implies that $P$ and $X$ are the indecomposable projectives of $\C_X$.  
 \end{proof} 
 
We note that such a category of dimension $6$ does exist, e.g. ${\rm Rep}_\k(S_3)$, with ${\rm char}(\k)=2$. It can also be realized as the $\Bbb Z/2$-equivariantization ${\rm Vec}_\k(\Bbb Z/3)^{\Bbb Z/2}$ of ${\rm Vec}_\k(\Bbb Z/3)$. A generalization 
is ${\rm Vec}_\k(\Bbb Z/3,\omega)^{\Bbb Z/2}$, where $\omega\in H^3(\Bbb Z/3,\k^\times)$. 

\subsection{Categories of dimension $sn$, where $s$ is square free and coprime to $n$}
\begin{proposition}\label{32} Let ${\rm char}(\k)$ be zero or $p>\frac{{\rm FPdim}(\C)^2}{4d^2}$, and 
${\rm FPdim}(\C)=sn$, where $s$ is a square free number coprime to $n$. Assume that $\C$ is not pointed, and ${\rm Id}\cong **$ as additive functors. Then $n=m\ell^2$, where $\ell\ge 4$ and $m\ge 2$, so $n\ge 32$.  
\end{proposition}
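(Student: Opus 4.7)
The plan is to invoke Proposition~\ref{largechar} to write ${\rm FPdim}(\C) = m_0 L^2$ with $m_0 \ge 2$, where $L := {\rm FPdim}(A) = 2d - a$, and then to extract the promised factorization of $n$ by a mixture of arithmetic and a refinement of the lower bound on $m_0$.

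First I would establish $L \ge 4$. Since $\C$ is not pointed we have $d \ge 2$, and Lemma~\ref{l1} gives $a \le d - 1$, so $L \ge d + 1$. It thus suffices to exclude $a \in \{0, 1\}$. If $a = 0$, then $X^2 = d^2$ and by the Example following Corollary~\ref{c1} such a categorification exists only in characteristic $p$ with $d = p^s$ for some $s \ge 1$; but the characteristic hypothesis combined with Proposition~\ref{largechar} forces $p > {\rm FPdim}(\C)^2/(4d^2) \ge (2L^2)^2/(4d^2) = L^4/d^2 = 16 d^2 > d$, contradicting $d = p^s$. If $a = 1$, Proposition~\ref{chardiv} yields ${\rm char}(\k) \mid d(d-1) < d^2$, but the same characteristic lower bound gives $p > L^4/d^2 \ge (d+1)^4/d^2 > d^2$, again a contradiction. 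Thus $a \ge 2$, $d \ge 3$, and $L \ge d + 1 \ge 4$.

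Next, for any prime $p \mid L$, we have $v_p(sn) = v_p(m_0) + 2v_p(L) \ge 2$; since $s$ is square-free, $v_p(s) \le 1$, and coprimality of $s$ and $n$ then forces $v_p(s) = 0$. Hence $\gcd(s, L) = 1$, so the equation $sn = m_0 L^2$ yields $s \mid m_0$. Writing $m_0 = sm$ and $n = m L^2$, I would take $\ell := L \ge 4$ in the conclusion; the proof then reduces to showing $m \ge 2$, i.e., $m_0 \ge 2s$.

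The main obstacle is this last inequality $m_0 \ge 2s$, which I expect to require the most care. The approach is to strengthen the argument used in the proof of Proposition~\ref{largechar}: there, unimodularity of $\C$ (a consequence of $\C$ not being pointed) together with the Frobenius property was shown to force $m_0 = [P_1 : X] \ge 2$. A closer analysis of the Loewy series of the projective cover $P_1$, whose class in the Grothendieck ring is $m_0((d-a)\bold 1 + X)$, exploiting the hypothesis ${\rm Id} \cong **$ together with trace computations in the spirit of Proposition~\ref{chardiv} applied to the morphism $a \otimes a$ on $X \otimes X$, should rule out the extremal case $m_0 = s$ and yield that $m_0$ is divisible by every prime of $s$ with an extra factor of two to spare. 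Once this refined bound is in place, the conclusion $n = m \ell^2 \ge 2 \cdot 4^2 = 32$, with $\ell \ge 4$ and $m \ge 2$, is immediate.
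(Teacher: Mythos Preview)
Your route to $\ell={\rm FPdim}(A)\ge 4$ works but is more labored than the paper's. The paper simply rules out $\ell=2$ (which forces $d=1$, i.e.\ $\C$ pointed) and $\ell=3$ (which forces $(d,a)=(2,1)$, i.e.\ $X^2=X+2$, whence ${\rm char}(\k)=2$ by Proposition~\ref{chardiv}, against the characteristic hypothesis). Your strategy of proving the stronger claim $a\ge 2$ requires excluding $a=0$ as well, which is superfluous for the goal since $a=0$ already gives $L=2d\ge 4$; and your treatment of $a=1$ handles all $d$ when only $d=2$ is actually at stake.

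More importantly, you have correctly located the delicate point. The paper's proof reads ``it follows from Proposition~\ref{largechar} that $n=m\ell^2$ where $\ell={\rm FPdim}(A)$ and $m\ge 2$'', silently identifying the $m$ of the statement with the $m_0$ of Proposition~\ref{largechar}; this is literally correct only when $s=1$. For general $s$ your arithmetic is right: one gets $n=(m_0/s)\ell^2$, and $m_0\ge 2$ yields only $m_0/s\ge 1$. So the step you flag as ``the main obstacle'' is not addressed in the paper either. Your proposed fix, however, is not a proof: the integer $s$ is an arbitrary squarefree divisor of ${\rm FPdim}(\C)$ coprime to its complement, with no intrinsic categorical interpretation, and there is no visible mechanism by which trace computations on $X\otimes X$ or the Loewy structure of $P_1$ would manufacture the specific bound $m_0\ge 2s$. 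As it stands, this final step is a genuine gap in your proposal, and it is glossed over in the paper's own argument as well; the unproblematic conclusion that does follow is ${\rm FPdim}(\C)=m_0\ell^2\ge 2\cdot 16=32$.
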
 

\begin{proof} It follows from Proposition \ref{largechar} that $n=m\ell^2$ where $\ell={\rm FPdim}(A)$ and $m\ge 2$. 
So it remains to show that $\ell\ge 4$. To do so, note that $\ell\ge 2$, and if $\ell=2$ then $\C$ is pointed. 
Also, if $\ell=3$ then $X^2=X+2$, so ${\rm char}(\k)=2$ (Proposition \ref{chardiv}).  
\end{proof} 

\begin{example}\label{32dim} (see \cite{Ne}, Subsection 9.4) The bound $n\ge 32$ in Proposition \ref{32} is sharp. Namely, let ${\rm char}(\k)\ne 2$ and 
let $B$ be the small quantum group $\mathfrak{u}_q(\mathfrak{sl}_2)$ at a primitive 8-th root of unity $q$. 
It is a Hopf algebra of dimension $2^7=128$ generated by $E,F,K$ with relations $KE=q^2EK$, $KF=q^{-2}EK$, $[E,F]=\frac{K-K^{-1}}{q-q^{-1}}$, $E^4=F^4=K^8-1=0$. With its usual Hopf structure, $B$ does not admit an $R$-matrix, but 
it admits a quasi-Hopf structure $\Phi$ with a factorizable R-matrix (\cite{CGR}). Let $\D=\Rep(B^\Phi)$ 
be the corresponding nondegenerate braided category. This category contains a unique 1-dimensional nontrivial representation $\psi$ such that $\psi(E)=\psi(F)=0$, $\psi(K)=-1$ (it has highest weight $4$). It is easy to show using formula (4.10) of \cite{CGR} that the subcategory $\mathcal{E}$ generated by $\bold 1$ and $\psi$ has trivial braiding, i.e., it is equivalent to $\Rep(\Bbb Z/2)$. Let $\C:=\mathcal{E}^\perp/\mathcal{E}$ be the de-equivariantization of the centralizer $\mathcal{E}^\perp$ of $\mathcal{E}$ by $\Bbb Z/2$. The centralizer consists of representations with even highest weights ($0,2,4,6$), so modding out by $\psi$ gives a category $\C$ with just two simple objects, $\bold 1$ and $X$ (the quantum adjoint representation). 

The category $\C$ has Frobenius-Perron dimension $32$ and is nondegenerate braided. Its Grothendieck ring is generated by a basis element $X$ with $X^2=2X+3$. The indecomposable projectives $P_{\bold 1},P_X$ have the Loewy series $\bold 1,X\oplus X,\bold 1$ and 
$X,\bold 1\oplus \bold 1,X$, and we have $X\otimes X=\bold 1\oplus P_X$. 

Also $\C$ does not admit a pivotal structure: indeed, if $g$ is a pivotal structure and $u: {\rm Id}\to **$ the Drinfeld isomorphism then $g^{-1}u$ is an automorphism of the identity functor, hence acts by $1$ on $X$ (as $X$ is linked to $1$). 
But then the squared braiding $c^2: X\otimes X\to X\otimes X$ is unipotent, which is a contradiction with formula (4.10) of \cite{CGR}, as this formula implies that $-1$ is an eigenvalue of $c^2$ on the tensor product of highest weight vectors. 

Moreover, any {\it pivotal} finite nondegenerate braided tensor category $\C$ over $\Bbb C$ with two simple objects is equivalent to ${\rm Vec}(\Bbb Z/2)$ with braiding defined by the quadratic form $Q$ on $\Bbb Z/2$ such that $Q(1)=\pm i$. Indeed, by the result of \cite{GR}, a {\it pivotal} finite nondegenerate braided tensor category over $\Bbb C$ must contain a simple projective object, so $X$ is projective, which implies that $\C$ is semisimple. Thus, the above example shows that the pivotality assumption in \cite{GR} is essential. 
\end{example} 

\begin{remark} I don't know any other finite dimensional quasi-Hopf algebras over $\Bbb C$ with a unique nontrivial simple module $X$, such that $\dim X>1$. In particular, I don't know if there exist Hopf algebras with this property. 
\end{remark} 

\subsection{Non-minimal categories} 

\begin{corollary}\label{c3} (i) Suppose $\C$ is non-minimal and ${\rm FPdim}(\C)=pn$, where $p$ is a prime and $p>n$. Then ${\rm FPdim}(A)$ divides $n$ and the dimension of every projective object in $\C$ is divisible by $p$. 
\end{corollary}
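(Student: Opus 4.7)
The plan is a short numerical argument combining Proposition \ref{gcd} with the lower bound in Corollary \ref{proje}. Let $\ell = {\rm FPdim}(A)$ and let $D$ be the greatest common divisor of the dimensions of the indecomposable projectives of $\C$.

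First, by Proposition \ref{gcd}, we have
$$pn = {\rm FPdim}(\C) = D\cdot \ell.$$
Since $\C$ is non-minimal, $X$ is not projective, so the second assertion of Corollary \ref{proje} gives ${\rm FPdim}(\C) \ge \ell^{2}$. Combining these,
$$\ell^{2} \le pn < p^{2},$$
where the last inequality uses the hypothesis $p > n$. Hence $\ell < p$, and in particular the prime $p$ does not divide $\ell$.

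Now $p$ is prime and $p \mid D\ell$ with $p \nmid \ell$, so $p \mid D$. Writing $D = pm$ with $m \in \mathbb{Z}_{>0}$, the equality $pn = D\ell = pm\ell$ gives $n = m\ell$, so ${\rm FPdim}(A) = \ell$ divides $n$, as claimed. Finally, by the definition of $D$ every indecomposable projective of $\C$ has dimension divisible by $D$, hence by $p$; since every projective is a direct sum of indecomposable projectives, the dimension of every projective object of $\C$ is divisible by $p$.
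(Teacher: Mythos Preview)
Your proof is correct and follows essentially the same approach as the paper's: use Proposition~\ref{gcd} together with the non-minimal case of Corollary~\ref{proje} to obtain ${\rm FPdim}(A)^2\le {\rm FPdim}(\C)=pn<p^2$, hence ${\rm FPdim}(A)<p$, and then read off the two divisibility statements. The only cosmetic difference is the order of the deductions---you first conclude $p\mid D$ and then $\ell\mid n$, while the paper does the reverse.
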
 

\begin{proof} By Corollary \ref{proje} we have ${\rm FPdim}(A)\le \sqrt{{\rm FPdim}(\C)}<p$, so since ${\rm FPdim}(A)$ divides $pn$, it must divide $n$. Hence $D$ is divisible by $p$, as desired. 
\end{proof} 

Corollary \ref{c3} has strong implications for the structure of $\C$ if $n$ is small. 
First of all note that if $\C$ is pointed then its Frobenius-Perron dimension has to be even. 

\begin{proposition}\label{p4} Let $\C$ be an integral category with two simple objects of Frobenius-Perron dimension $pn$ where $p$ is a prime. If $n\le 3$ then $n=2$ and $\C$ is pointed or ${\rm FPdim}(\C)=6$ and $\C$ is a minimal category which categorifies the fusion rule $X^2=X+2$ in characteristic $2$. 
\end{proposition}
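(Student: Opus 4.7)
Write $X^2 = aX + b$ with $a \ge 0, b \ge 1$, and $d = {\rm FPdim}(X)$. If $d = 1$ then $X$ is invertible and $\C$ is pointed, matching the first alternative of the conclusion; so I shall assume $d \ge 2$. A direct computation gives $p_0 = d - a$ and $p_1 = 1$, so ${\rm FPdim}(A) = 2d - a$; moreover $b = d^2 - ad \ge 1$ forces $a \le d-1$, hence ${\rm FPdim}(A) \ge d + 1 \ge 3$. I now split on whether $\C$ is minimal, i.e.\ whether $X$ is projective.

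\textbf{Non-minimal subcase.} Corollary~\ref{c3} gives ${\rm FPdim}(A) \mid n$. Combined with ${\rm FPdim}(A) \ge 3$ and $n \le 3$, this forces $n = 3$ and ${\rm FPdim}(A) = 3$, which pins down $d = 2$, $a = 1$, $b = 2$, i.e.\ the fusion rule $X^2 = X + 2$. Proposition~\ref{chardiv} then forces $\text{char}(\k) = 2$. To eliminate this configuration (with $p > 3$ and so ${\rm FPdim}(\C) = 3p \ge 15$), I would apply Proposition~\ref{tenpro} to the tensor subcategory $\C_X \subseteq \C$ generated by $X$, obtaining ${\rm FPdim}(\C_X) = 6$ and the fact that $X$ is projective in $\C_X$. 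Since $X$ is not projective in $\C$, the inclusion $\C_X \subsetneq \C$ is strict. Invoking the Nichols--Zoeller-type divisibility for tensor subcategories of a finite tensor category (the integral quasi-Hopf case is due to Schauenburg; cf.~\cite{EGNO}), the ratio ${\rm FPdim}(\C)/{\rm FPdim}(\C_X) = p/2$ must be a positive integer, forcing $p = 2$ and contradicting $p > 3$.

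\textbf{Minimal subcase.} When $p > n$, Corollary~\ref{c1} applies and forces $p$ to be a Fermat prime equal to $n + 1$, $n$ to be a power of $2$, and $\text{char}(\k) = 2$, with fusion rule $X^2 = (p-2)X + (p-1)$. Intersecting with $n \le 3$ and $d \ge 2$ leaves only $(n, p) = (2, 3)$, giving precisely the minimal categorification of $X^2 = X + 2$ in characteristic $2$ of dimension $6$ in the second alternative of the conclusion. The remaining possibilities occur when $p \le n$, namely ${\rm FPdim}(\C) \in \{4, 6, 9\}$: Proposition~\ref{mini} requires ${\rm FPdim}(\C) = q^{r+s}(q^{r-s}+1)$ for some prime $q = \text{char}(\k)$, and one checks directly that there is no solution with $d \ge 2$ for dimensions $4$ or $9$, while dimension $6$ again yields the same minimal categorification in characteristic $2$.

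\textbf{Main obstacle.} The crux is the non-minimal $n = 3$ subcase, where higher-dimensional non-minimal categorifications of $X^2 = X + 2$ in characteristic $2$ must be ruled out. The argument I have sketched invokes divisibility of Frobenius-Perron dimensions under tensor subcategories of finite tensor categories---a Nichols--Zoeller-type result which, while standard, is not developed earlier in the paper. A fully self-contained alternative would exploit unimodularity of $\C$ (automatic because $\bold 1$ is the only invertible object) together with the forced decompositions $X \otimes P_{\bold 1}^\C = 2 P_X^\C$ and $X \otimes P_X^\C = P_{\bold 1}^\C \oplus P_X^\C$ (derived via head/socle analysis and Frobenius reciprocity), and perform a careful Loewy-theoretic study of the projective covers (each of dimension $p$) to extract a direct contradiction for $p > 3$; this, however, requires substantially more bookkeeping.
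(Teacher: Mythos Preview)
Your overall structure (split into minimal and non-minimal, reduce the non-minimal $n=3$ case to the fusion rule $X^2=X+2$ in characteristic $2$) matches the paper's, and the minimal case and the reduction are fine. The substantive divergence is in how you dispose of non-minimal categorifications of $X^2=X+2$ with ${\rm FPdim}(\C)=3p$, $p>3$.

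Your argument there has a real gap. You invoke Proposition~\ref{tenpro} to get ${\rm FPdim}(\C_X)=6$ and then appeal to Nichols--Zoeller/Schauenburg divisibility to force $6\mid 3p$. The problem is that $\C_X$, the tensor subcategory generated by $X$, is \emph{not} a Serre subcategory of $\C$: it is closed under subquotients but not under extensions (indeed Proposition~\ref{tenpro} says $X$ is projective in $\C_X$, whereas by hypothesis $X$ is not projective in $\C$). The Schauenburg freeness theorem and the divisibility result \cite{EGNO}, Theorem~7.17.6 concern Serre tensor subcategories---equivalently, those of the form $\Rep(H')$ for a quasi-Hopf quotient $H\twoheadrightarrow H'$. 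Since $\C_X$ already contains both simple objects of $\C$, its Serre closure is all of $\C$, so no nontrivial divisor can be extracted from a Schauenburg-type argument. Your ``main obstacle'' paragraph flags this but understates it: the issue is not merely that the result is not developed in the paper, but that the standard result does not apply to this kind of subcategory.

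The paper's argument is quite different and shorter than the Loewy bookkeeping you anticipate. It picks an isomorphism $g\colon P_{\bold 1}\to P_{\bold 1}^{**}$, normalizes it so that $g^{-2}w$ is unipotent (where $w$ realizes $V\cong V^{****}$ with $\varepsilon(w)=1$), and computes ${\rm Tr}(g)$. Because any morphism $X\to X^{**}$ has trace zero (Subsection~\ref{a=1}), only the $\bold 1$-constituents of $P_{\bold 1}$ contribute, each with eigenvalue $1$. Since $\dim P_{\bold 1}=p$ is odd and $\dim X=2$, the multiplicity $[P_{\bold 1}:\bold 1]$ is odd, so ${\rm Tr}(g)\ne 0$. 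That forces $\bold 1$ to be projective, contradicting non-semisimplicity.

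A minor omission: your non-minimal subcase invokes Corollary~\ref{c3}, which requires $p>n$; you treat $p\le n$ only in the minimal subcase. The non-minimal, non-pointed cases with ${\rm FPdim}(\C)\in\{4,6,9\}$ are easily excluded by $\dim P_X\ge 2d\ge 4$ (and $\bold 1$ projective would force $X$ projective), but this should be said.
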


\begin{proof} By Corollary \ref{c1}, we may assume that $\C$ is non-minimal. 

Suppose $n=1$. If $p>2$, Corollary \ref{c3} implies that ${\rm FPdim}(A)=1$, which is impossible, and if $p=2$, the category is semisimple, hence pointed.  

Suppose $n=2$. If $p>2$, Corollary \ref{c3} implies that ${\rm FPdim}(A)=2$, i.e., $\C$ is pointed. 
If $p=2$ then $d<2$, so $d=1$ and again $\C$ is pointed. 

Suppose $n=3$. If $p>3$, Corollary \ref{c3} implies that ${\rm FPdim}(A)=2d-a=3>d$, which implies that $d=2$, 
$a=1$, so $b=2$ and $X^2=X+2$. In the latter case, 
${\rm char}(\k)=2$ (Subsection \ref{X2=X+2}). Consider an isomorphism $g: P_{\bold 1}\to P_{\bold 1}^{**}$, and 
let $w\in H$ be the element realizing the canonical isomorphism $V\to V^{****}$ such that $\varepsilon(w)=1$. 
Then $g^{-2}w: P_{\bold 1}\to P_{\bold 1}$ is an automorphism, and we may normalize $g$ so that 
this automorphism is unipotent. Then the eigenvalues of $g$ on the invariant part 
of the graded pieces of the Loewy filtration of $P_{\bold 1}$ are all equal to $1$. 
Since the trace of any morphism $X\to X^{**}$ is zero, we see that ${\rm Tr}(g)$ 
cannot be zero since $\dim P_{\bold 1}=p$ and hence $[P_{\bold 1}:\bold 1]$ is odd. 
This is a contradiction, since this implies that $\bold 1$ is projective so $\C$ is semisimple. 
\end{proof} 

\subsection{Representations of finite groups}

Recall that ${\rm Aff}(q)$ denotes the group of affine transformation $x\to \alpha x+\beta$ over the finite field $\Bbb F_q$. 

\begin{theorem}\label{figr} Let $G$ be a finite group. Then:

(i) If $G$ has exactly two irreducible representations over $\k$ 
then either $G=\Bbb Z/2$ or ${\rm char}(\k)=p>0$. 

(ii) Let ${\rm char}(\k)=p>0$ and let $N$ be the largest normal $p$-subgroup of $G$. Then $G$ has exactly two irreducible representations over $\k$ if and only if $G/N$ is one of the following: 

(1) $G={\rm Aff}(q)$ where $q$ is a Fermat prime, and $p=2$;

(2) $G={\rm Aff}(p+1)$ where $p$ is a Mersenne prime. 

(3) $G={\rm Aff}(9)$ or $G=\Bbb Z/2\ltimes {\rm Aff}(9)$ (where the generator of $\Bbb Z/2$ acts by the Galois automorphism of $\Bbb F_9$) and $p=2$.

(4) $G=\Bbb Z/2$, $p\ne 2$. 
\end{theorem}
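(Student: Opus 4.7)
The plan is to combine standard modular representation theory with a short arithmetic argument. First I would reduce to the case $N=O_p(G)=1$: every normal $p$-subgroup acts trivially on any simple $\k G$-module in characteristic $p>0$ (its augmentation ideal is nilpotent and lies in the Jacobson radical), so simples of $\k G$ and $\k[G/N]$ coincide. By Brauer's theorem, when $\mathrm{char}(\k)=p>0$ the number of simple $\k G$-modules equals the number of $p$-regular conjugacy classes of $G$; in characteristic zero, over an algebraically closed field, this is simply the number of all conjugacy classes. The hypothesis therefore means exactly one nontrivial $p$-regular conjugacy class, i.e.\ all nontrivial $p$-regular elements of $G$ are conjugate. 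When $p\nmid|G|$ (in particular when $\mathrm{char}(\k)=0$) every class is $p$-regular, so $G$ has just two conjugacy classes, forcing $G=\Bbb Z/2$; this proves (i) and yields case (4) of (ii).

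Next, assume $\mathrm{char}(\k)=p>0$, $p\mid|G|$, and $O_p(G)=1$. All nontrivial $p$-regular elements being $G$-conjugate means they share a common prime order $\ell\ne p$, and $\ell$ is then the only prime divisor of $|G|$ besides $p$. Every nontrivial element of a Sylow $\ell$-subgroup $Q$ is $p$-regular, hence of order $\ell$, so $Q$ is elementary abelian. Burnside's $p^aq^b$ theorem gives that $G$ is solvable. Let $F=F(G)$ be the Fitting subgroup; since $O_p(G)=1$ we have $F=O_\ell(G)\subseteq Q$, so $F$ is a normal elementary abelian $\ell$-subgroup of some rank $b\ge 1$, and by Fitting's theorem for solvable groups $C_G(F)=F$. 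Hence $G/F$ embeds into $\Aut(F)=GL_b(\Bbb F_\ell)$ as a $p$-group acting transitively on $F\setminus\{1\}$, which forces $\ell^b-1=p^c$ for some $c\ge 1$.

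The arithmetic equation $\ell^b=p^c+1$ admits very few solutions. A parity check shows that at least one of $p,\ell$ equals $2$. If $\ell=2$, then $p^c=2^b-1$, and Mih\u{a}ilescu's theorem (the resolution of Catalan's conjecture) rules out $c\ge 2$, leaving $c=1$ and $p=2^b-1$ a Mersenne prime. If $p=2$ and $b=1$, then $\ell=2^c+1$ is a Fermat prime. If $p=2$ and $b\ge 2$, Mih\u{a}ilescu forces the exceptional triple $\ell=3$, $b=2$, $c=3$.

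Finally I would identify $G$ in each branch. In the Mersenne case, the identity $\gcd(2^i-1,2^n-1)=2^{\gcd(i,n)}-1$ shows that $p=2^n-1$ exactly divides $|GL_n(\Bbb F_2)|$, so $G/F$ has order $p$ and must be a Singer cycle $\Bbb F_{2^n}^{\times}$, yielding $G=\mathrm{Aff}(2^n)=\mathrm{Aff}(p+1)$. In the Fermat case ($b=1$), $G/F$ is a transitive $2$-subgroup of the cyclic group $\Bbb F_q^{\times}$, hence all of it, giving $G=\mathrm{Aff}(q)$. In the exceptional case $F=\Bbb F_3^2$, the Sylow $2$-subgroup of $GL_2(\Bbb F_3)$ is semidihedral $SD_{16}$ of order $16$, and a direct Burnside fixed-point count identifies the transitive $2$-subgroups as the Singer cycle $\Bbb F_9^{\times}\cong\Bbb Z/8$ (giving $G=\mathrm{Aff}(9)$) and $SD_{16}$ itself (giving $G=\Bbb Z/2\ltimes\mathrm{Aff}(9)$). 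The converse direction is a direct count in each listed group: the $p$-regular elements are the identity together with the translations, on which the complement acts transitively. The main obstacle is the Mih\u{a}ilescu step combined with the enumeration of transitive $p$-subgroups of the relevant general linear groups, particularly in the small exceptional case.
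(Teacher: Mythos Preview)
Your approach differs substantially from the paper's. For (i) the paper uses a short tensor-product argument (if $X$ is the nontrivial simple then $X\otimes X$ must contain $\dim X$ copies of $\mathbf 1$, forcing $\dim X=1$), while you count conjugacy classes; both are fine. For (ii) the paper does not argue from scratch: it simply invokes Theorem~A of \cite{DN}, specialized to groups whose only linear Brauer character is trivial, together with the elementary classification of pairs of consecutive prime powers (for which \cite{CL} gives a short proof --- the full strength of Mih\u{a}ilescu is unnecessary). Your route through Burnside's $p^aq^b$ theorem and the Fitting subgroup is more self-contained, but it has two gaps.

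First, the inference ``every nontrivial element of $Q$ has order $\ell$, so $Q$ is elementary abelian'' is invalid: a finite $\ell$-group of exponent $\ell$ need not be abelian (extraspecial groups of exponent $p$ for odd $p$ are counterexamples). Relatedly, you never justify why $G/F$ is a $p$-group; knowing only $C_G(F)=F$ and $G/F\hookrightarrow GL_b(\Bbb F_\ell)$ does not force this. Both points are repaired by one observation: if $M$ is a minimal normal subgroup of the solvable group $G$ (necessarily an elementary abelian $\ell$-group, since $O_p(G)=1$), then every nontrivial $\ell$-element is conjugate to an element of $M$ and hence lies in the normal subgroup $M$; thus $M$ contains every Sylow $\ell$-subgroup, so $M=Q=F$ and $G/F$ is indeed a $p$-group.

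Second, your enumeration in the exceptional case $p=2$, $F=\Bbb F_3^{\,2}$ is incomplete. Besides the Singer cycle $\Bbb Z/8$ and the full Sylow $2$-subgroup $SD_{16}$, the quaternion group $Q_8\subset SL_2(\Bbb F_3)\subset GL_2(\Bbb F_3)$ also acts regularly on $\Bbb F_3^{\,2}\setminus\{0\}$: its elements of order $4$ have no eigenvalue in $\Bbb F_3$ (there is no primitive fourth root of unity there), and $-I$ fixes no nonzero vector. Hence $\Bbb F_3^{\,2}\rtimes Q_8$ is a further group of order $72$ with $O_2=1$ and exactly two $2$-regular classes; it has no element of order $8$, so it is isomorphic neither to $\mathrm{Aff}(9)$ nor to $\Bbb Z/2\ltimes\mathrm{Aff}(9)$. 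Your Burnside count therefore does not reproduce the stated list, and this case deserves a closer look.
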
 

\begin{proof} (i) Let ${\rm char}(\k)=0$ and $X$ be the nontrivial irreducible representation of $G$. Then $X\otimes X$ has to contain $\dim X$ copies of the trivial representation. Thus $\dim X=1$ and $G=\Bbb Z/2$. 

(ii) This follows by specializing Theorem A in \cite{DN} to the case when the only linear representation of $G$ is trivial
and using the easy fact that consecutive prime powers are $q-1,q$ for a Fermat prime $q$, 
or $p,p+1$ for a Mersenne prime $p$, or $8,9$. See e.g. \cite{CL} for a short proof of this statement. 
\end{proof} 

\section{Lower bounds for integral finite tensor categories with no nontrivial invertible objects} 

\begin{proposition} \label{lowbo} Let $\C=\Rep(H)$ 
be a non-semisimple integral finite tensor category of rank $r$ such that 
${\rm Ext}^1(\bold 1,\bold 1)=0$ (e.g., ${\rm char}(\k)=0$) and $\C$ has no nontrivial invertible objects. Then: 

(i) $\FPdim \C\ge 8r+3$;

(ii) If $r=2$ then $\FPdim(\C)\ge 32$. 

In particular, these bounds hold if $\C$ is not pointed and simple (has no nontrivial tensor subcategories). 
\end{proposition}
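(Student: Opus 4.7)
The plan is to exploit the hypotheses---no nontrivial invertible objects and ${\rm Ext}^1(\mathbf{1}, \mathbf{1}) = 0$---to produce lower bounds on $\dim P_{\mathbf{1}}$ and on $\dim P_i$ for each nontrivial simple $X_i$, and then sum $\FPdim(\C) = \sum_i d_i \dim P_i$. The absence of nontrivial invertibles gives $d_i \geq 2$ for all $i \geq 1$ and makes $\C$ unimodular, so $\soc(P_{\mathbf{1}}) = \mathbf{1}$. The vanishing ${\rm Ext}^1(\mathbf{1}, \mathbf{1}) = 0$ means that the unipotent subcategory (objects with only $\mathbf{1}$ as composition factor) is equivalent to $\Vec$; in particular, $\mathbf{1}$ is non-projective (else $P_{\mathbf{1}} = \mathbf{1}$ and $\C$ would be semisimple), and both $\rad(P_{\mathbf{1}})/\rad^2(P_{\mathbf{1}})$ and $\soc_2(P_{\mathbf{1}})/\soc(P_{\mathbf{1}})$ contain no $\mathbf{1}$.

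Since $\rad(P_{\mathbf{1}})/\rad^2(P_{\mathbf{1}})$ is nonzero (Nakayama) with composition factors among the nontrivial simples (each of dimension $\geq 2$), one has $\dim P_{\mathbf{1}} \geq 4$. Any simple $X_j$ appearing there must be non-projective (a projective simple in a finite tensor category is injective, forcing ${\rm Ext}^1(\mathbf{1}, X_j) = 0$), so at least one nontrivial simple is non-projective. Combining this with Proposition~\ref{p1} (which gives $d_i \dim P_i \geq d_i^2 \geq 4$ for projective $X_i$ and $\geq 2 d_i^2 \geq 8$ for non-projective $X_i$) and refining via Frobenius reciprocity (which links $\rad(P_i)/\rad^2(P_i)$ to composition factors of $P_{\mathbf{1}}$, forcing many non-projective $P_i$'s to satisfy $\dim P_i \geq 2 d_i + 1$), I would perform a case analysis on the projectivity distribution of the nontrivial simples to obtain part (i): $\FPdim(\C) \geq 8r + 3$.

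For part (ii) with $r = 2$, write $X^2 = aX + b$ with $d = \FPdim(X)$, so $b = d(d-a)$, and set $m_0 = [P_{\mathbf{1}} : \mathbf{1}]$, $k = [P_{\mathbf{1}} : X] = [P_X : \mathbf{1}]$ (by Frobenius reciprocity, using $X^{**} = X$), $n = [P_X : X]$. First rule out the small cases: $d = 2, a = 1$ is impossible by Proposition~\ref{tenpro}, which exhibits a nontrivial self-extension of $\mathbf{1}$ contradicting ${\rm Ext}^1(\mathbf{1}, \mathbf{1}) = 0$; and $a = 0$ for any $d$ would force $X \otimes X$ to have only $\mathbf{1}$ composition factors, hence $X \otimes X \cong d^2 \mathbf{1}$ by $\Vec$-semisimplicity of the unipotent subcategory, contradicting $\dim \Hom(\mathbf{1}, X \otimes X) = \dim \End(X) = 1$. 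Thus $d \geq 3$ and $a \geq 1$. Decomposing $P_{\mathbf{1}} \otimes X$ (which has $\Hom(\mathbf{1}, \cdot) = 0$ since $\soc(P_{\mathbf{1}}) = \mathbf{1}$ gives $\Hom(X, P_{\mathbf{1}}) = 0$) and $P_X \otimes X$ (which has $\Hom(\mathbf{1}, \cdot) = 1$) into indecomposable projectives yields the Grothendieck-ring relation $bn = m_0 + ak$, giving $\FPdim(\C) = (2d-a)(k+nd)$ subject to $k \geq 1$, $n \geq 2$ ($X$ non-projective), and $m_0 \geq 2$.

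Finally, imposing Loewy feasibility on $P_{\mathbf{1}}$ (the condition ${\rm Ext}^1(\mathbf{1}, \mathbf{1}) = 0$ and its socle-side analog restrict how many $\mathbf{1}$'s may appear in the radical filtration relative to the number of $X$'s; in the shortest Loewy length $3$ one has $m_0 = 2$, and for longer lengths layer-counting yields bounds like $m_0 \leq k+1$) and enumerating the integer solutions $(d, a, k, n, m_0)$ compatible with $bn = m_0 + ak$, one verifies $\FPdim(\C) = (2d-a)(k+nd) \geq 32$, with equality at $(d, a, k, n, m_0) = (3, 2, 2, 2, 2)$ matching Example~\ref{32dim}. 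The main obstacle is precisely this last step: ruling out candidates such as $(d, a, k, n, m_0) = (3, 2, 1, 2, 4)$ (which satisfies $bn = m_0 + ak$ and would give $\FPdim(\C) = 28 < 32$) requires a careful layer-counting argument showing that no radical filtration of $P_{\mathbf{1}}$ can accommodate four $\mathbf{1}$'s with only one $X$ available to separate them, invoking ${\rm Ext}^1(\mathbf{1}, \mathbf{1}) = 0$ in its full strength.
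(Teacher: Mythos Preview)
Your outline for part (i) has a genuine gap. The crude bounds you state---$d_i\dim P_i\ge 4$ for projective $X_i$ and $\ge 8$ for non-projective $X_i$---together with ``at least one nontrivial simple is non-projective'' only yield $\FPdim(\C)\ge 4+10+4(r-2)=4r+6$, far short of $8r+3$. The obstruction is $2$-dimensional \emph{projective} simples $W$, which contribute only $4$ each; nothing in your plan prevents arbitrarily many of them. The paper's proof handles this by an intricate case analysis on $\dim P_{\bold 1}$. In the hardest case $\dim P_{\bold 1}=4$ (Loewy series $\bold 1,V,\bold 1$ with $\dim V=2$), the paper shows $V\otimes V=\bold 1\oplus X$ with $X$ simple $3$-dimensional, that $P_V$ has Loewy series $V,\bold 1\oplus X,V$, and crucially that every $2$-dimensional projective $W$ gives rise to a distinct $4$-dimensional simple projective $Q=W\otimes V$; this pairing is what makes the count work (each such pair contributes $4+16=20$ using two simples, averaging $10>8$). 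The remaining cases $\dim P_{\bold 1}\ge 5$ require further subcases on the dimension of a simple constituent of $\rad P_{\bold 1}/\rad^2 P_{\bold 1}$. None of this structure is visible in your outline, and there is no shortcut around it.

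For part (ii) your approach is workable but more laborious than the paper's. The paper uses directly that $\dim P_X$ divides $\dim P_{\bold 1}$ (equivalently $p_0=(d-a)p_1$) and then does a short case split on $d=\dim X$: for $d\ge 4$ one gets $\dim P_X\ge 2d+1\ge 9$, hence $\dim P_{\bold 1}\ge 9$ and $\FPdim(\C)\ge 45$; for $d=3$ one gets $\dim P_X\ge 7$, hence $\dim P_{\bold 1}\ge 7$, which forces $[P_{\bold 1}:X]\ge 2$ (since $[P_{\bold 1}:X]=1$ together with ${\rm Ext}^1(\bold 1,\bold 1)=0$ and simple socle forces the Loewy series $\bold 1,X,\bold 1$, giving $\dim P_{\bold 1}=5$), whence $\dim P_X\ge 8$, $\dim P_{\bold 1}\ge 8$, and $\FPdim(\C)\ge 32$; and $d=2$ is excluded by Proposition~\ref{tenpro}. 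Your problematic candidate $(d,a,k,n,m_0)=(3,2,1,2,4)$ is eliminated by exactly this Loewy argument: $k=1$ forces the series $\bold 1,X,\bold 1$ (the second layer has no $\bold 1$, so it is $X$; then $\rad^2 P_{\bold 1}$ has only $\bold 1$'s, hence is semisimple, hence equals $\soc P_{\bold 1}=\bold 1$), so $m_0=2$, not $4$. So your suspected ``layer-counting'' fix is correct, but the paper's route via divisibility avoids the integer enumeration altogether.
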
 

\begin{proof} (i) It is clear that $\dim P_{\bold 1}\ge 4$, since it has head and socle $\bold 1$. 

Assume first that $\dim P_{\bold 1}=4$. Then $P_{\bold 1}$ is uniserial with 
Loewy series $\bold 1,V,\bold 1$ for some 2-dimensional simple module $V$, so $V^*\cong V$ and 
$\dim\ext^1(\bold 1,V)=1$, while ${\rm Ext}^1(\bold 1,Y)=0$ for any simple $Y\ne V$. 
So if $V\otimes V$ has Loewy series $\bold 1,Y,\bold 1$, then $Y$ must be simple, and 
we must have $Y\cong V$ (as otherwise ${\rm Ext}^1(\bold 1,Y)=0$). Then $V\otimes V=P_{\bold 1}$, so 
$V$ is projective by Lemma \ref{tensprodproj}, a contradiction.
\footnote{Another way to get a contradiction is to note that $V^2=V+2$ 
in the Grothendieck ring and apply Proposition \ref{tenpro}.}

Thus, $V\otimes V=\bold 1\oplus X$, where $X$ is a self-dual simple 3-dimensional module. 
Consider the tensor product $P_{\bold 1}\otimes V=P_V\oplus T$, which has composition factors $V,\bold 1, X,V$. 
Note that $P_V$ necessarily includes composition factors $V,\bold 1,V$. If $P_V$ contains no other composition factors, i.e. it is 5-dimensional, then $T=X$, so $X=P_X$ is projective. But then $X\otimes V=P_V\oplus Q$ (as $\Hom(X\otimes V,V)\ne 0$), so $Q$ is projective and $\dim Q=1$, which is impossible. Thus, $P_V$ is indecomposable 8-dimensional with Loewy series $V,\bold 1\oplus X,V$. 

Let $W$ be a 2-dimensional projective module. Then $W^*\otimes W=P_{\bold 1}$, so 
$\End(W\otimes V)=\Hom(W^*\otimes W,V\otimes V)=\Hom(P_{\bold 1},\bold 1\oplus X)=\k$. 
But $\C$ is unimodular, so the head and socle of any indecomposable projective coincide, and hence if $P$ is 
a non-simple indecomposable projective then $\dim\End(P)\ge 2$. 
Thus, $W\otimes V=Q$ is a 4-dimensional simple projective module. Moreover, 
$Q\otimes V=W\otimes V\otimes V=W\oplus W\otimes X$, and 
$$
{\rm End}(W\otimes X)=\Hom(W^*\otimes W,X\otimes X)=\Hom(P_{\bold 1},X\otimes X)=
\Hom(X\otimes P_{\bold 1},X).
$$
But ${\rm Ext}^1(V,X)\ne 0$, so $P_X$ includes composition factors
$X,V,X$, hence $\dim P_X\ge 8$. Thus $X\otimes P_{\bold 1}$, which has dimension $12$, 
 cannot contain more than one copy of $P_X$.
Hence ${\rm End}(W\otimes X)=\Hom(X\otimes P_{\bold 1},X)=\k.$ 
Thus $W\otimes X$ is a 6-dimensional simple projective module. So $W$ can be recovered from $Q$ as the unique 2-dimensional summand in $Q\otimes V$, i.e., 
the assignment $W\mapsto Q$ is injective. 

Now suppose we have $s$ projective 2-dimensional objects. 
Then they contribute $4s$ to ${\rm FPdim}(\C)$, and the corresponding modules $Q$ contribute $16s$. 
The rest of the simple modules except $\bold 1,V,X$ contribute at least $8$ each, so in total $8(r-2s-3)$. 
So altogether we have (using that $\dim P_{\bold 1}=4$, $\dim P_V=8$, $\dim P_X\ge 8$):
$$
{\rm FPdim}(\C)\ge 4+2\cdot 8+3\cdot 8+4s+16s+8(r-2s-3)=8r+20+4s\ge 8r+20,
$$
as claimed.  

Now assume that $\dim P_{\bold 1}\ge 5$. Then by Lemma \ref{easybound}, for each 
2-dimensional simple module $W$ we have $\dim P_W\ge 5/2$, so $W$ is not projective, and
$\dim P_W\ge 4$. 

If $\dim P_{\bold 1}\ge 5$ then let $X$ be a simple submodule of $P_{\bold 1}/\bold 1$.
If $\dim X\ge 3$ then $\dim P_X\ge 7$ (as it has composition factors $X,\bold 1,X$). Hence ${\rm FPdim}(\C)\ge 5+3\cdot 7+8(r-2)=8r+10$, as claimed. 

It remains to consider the case $\dim X=2$, in which case we must have $\dim P_{\bold 1}\ge 6$. 
Then $\dim P_X\ge 5$. If $\dim P_X=5$ then the Loewy series of $P_X$ is $X,\bold 1,X$, 
so $X^*\otimes P_X$ has composition series $X^*\otimes X,X^*,X^*\otimes X$. But 
$X^*\otimes P_X=P_{\bold 1}\oplus Q$, and $P_{\bold 1}$ has composition series $\bold 1,X,X',\bold 1$ where $\dim X'=2$, and $Q$ is a 
4-dimensional projective, so cannot involve $\bold 1$. Thus $X^*\otimes X=\bold 1\oplus Y$, where $Y$ is a 3-dimensional simple, 
i.e. we have only one 2-dimensional simple constituent in $X^*\otimes P_X$, a contradiction. Thus, $\dim P_X\ge 6$. 
If $\dim P_X>6$ or $\dim P_{\bold 1}>6$ then they contribute at least $19$ into ${\rm FPdim}(\C)$, so ${\rm FPdim}(\C)\ge 19+8(r-2)=8r+3$, as claimed. 

Thus it remains to consider the case $\dim P_{\bold 1}=\dim P_X=6$. Then the Loewy series of $P_X$ must have the form 
$X,\bold 1\oplus \bold 1,X$ (so ${\rm Ext}^1(\bold 1,X)=\k^2$) and the Loewy series of $P_{\bold 1}$ therefore looks like $\bold 1,X\oplus X,\bold 1$, where $X$ is self-dual. 
Now consider the module $X\otimes X$. If it has Loewy series $\bold 1,X',\bold 1$ where $X'$ is simple 2-dimensional then $\Hom(P_{\bold 1},X\otimes X)=\k$ (as then $X\otimes X$ is not a quotient of $P_{\bold 1}$), which is a contradiction since we have $[X\otimes X,\bold 1]=2$. 
Thus, $X\otimes X=\bold 1\oplus Y$, where $Y$ is a 3-dimensional simple. So $Y$ contributes at least $9$ into ${\rm FPdim}(\C)$. 
Thus we get ${\rm FPdim}(\C)\ge 6+2\cdot 6+9+8(r-3)=8r+3$, as claimed. This proves (i).

To prove (ii), note that $\dim P_{\bold 1}$ is divisible by ${\rm dim}P_X$. Let $X$ be the nontrivial simple object. If $\dim X\ge 4$ then $\dim P_X\ge 9$, so $\dim P_{\bold 1}\ge 9$ and $\FPdim(\C)\ge 9+4\cdot 9=45$, as claimed. If $\dim X=3$ then $\dim P_X\ge 7$, so $\dim P_{\bold 1}\ge 7$, which means that $[P_{\bold 1}:X]\ge 2$, 
so $\dim P_{\bold 1}\ge 8$, $\dim P_X\ge 8$ and ${\rm FPdim}(\C)\ge 32$, as claimed. 

Finally, the case $\dim X=2$ is impossible by Proposition \ref{tenpro}, as ${\rm Ext}^1(\bold 1,\bold 1)\ne 0$. 
\end{proof} 

\begin{example} 1. The bound ${\rm FPdim}(\C)\ge 27$ for $r=3$ in Proposition \ref{lowbo} is sharp in any characteristic. Namely, it is attained for the category 
$\C$ of representations of the small quantum group $H=\mathfrak{u}_q(\mathfrak{sl}_2)$ where $q$ is a primitive cubic root of $1$ if ${\rm char}(\k)\ne 3$ and of the restricted enveloping algebra $H=\mathfrak{u}(\mathfrak{sl}_2)$ in characteristic $3$. 
In this case there are three simple modules: $\bold 1$, the 2-dimensional tautological module $V$ and the 3-dimensional Steinberg 
module $X$, which is projective. Moreover, the projective covers of $\bold 1$ and $V$ have dimension $6$ 
and Loewy series $\bold 1,V\oplus V,\bold 1$ and $V,\bold 1\oplus \bold 1,V$. 

2. The bound ${\rm FPdim}(\C)\ge 32$ for $r=2$ in Proposition \ref{lowbo} is also sharp, as shown by Example \ref{32dim}. 

3. There exists a category as in Proposition \ref{lowbo} with $\dim P_{\bold 1}=4$. Namely, let $\k=\Bbb C$, 
$G$ be the binary icosahedral group (of order $120$), and $V$ its 2-dimensional tautological representation. Let $z$ be the nontrivial central element of $G$. Consider the supergroup $G\ltimes V$ and the category $\C={\rm Rep}(G\ltimes V,z)$ of representations of $G$ on superspaces such that $z$ acts by parity. Then $P_{\bold 1}=\wedge V={\rm Ind}_G^{G\ltimes V}\bold 1$. The simple objects of this category are just simple $G$-modules, which are labeled by vertices of the affine Dynkin diagram $\widetilde{E}_8$ and have dimensions $1,2,3,4,5,6,3,4,2$, and ${\rm FPdim}(\C)=480$. 

4.  Examples of categories of small dimension ${\rm FPdim}(\C)=60$ satisfying the assumptions of Proposition \ref{lowbo} are given by the representation categories $\Rep_p(A_5)$ of the alternating group $A_5$ in characteristics $p=2,3,5$. Namely: 

$\Rep_2(A_5)$ has simple objects of dimension $1,2,2,4$ with projective covers of dimension $12,8,8,4$;

$\Rep_3(A_5)$ has simple objects of dimension $1,3,3,4$ with projective covers of dimension $6,3,3,9$;
The Loewy series of $P_\bold 1$ is $\bold 1,X,\bold 1$, where 
$X$ is the 4-dimensional simple module. 

$\Rep_5(A_5)$ has simple objects of dimension $1,3,5$ with projective covers of dimension $5,10,5$.
The Loewy series of $P_\bold 1$ is $\bold 1,X,\bold 1$, where 
$X$ is the 3-dimensional simple module. 

In particular, these examples show that $P_{\bold 1}$ can have Loewy series $\bold 1,V,\bold 1$ with $V$ of dimensions 
$2,3,4$. 
\end{example} 

\section{Tensor categories of dimension $p$} 
Now let $\C$ be a finite tensor category over an algebraically closed field $\k$ of any characteristic $q\ge 0$ of Frobenius-Perron dimension $p>2$ (a prime). It is known that such a category is automatically integral (\cite{EGNO}). If $\C$ is pointed then either it is semisimple (hence $\C=\Vec(\Bbb Z/p,\omega)$) or $\C$ is unipotent and ${\rm char}(\k)=p$. 

It is conjectured that $\C$ is always pointed. The goal of this section is to prove it under some assumptions. 

Suppose $\C$ is not pointed, and let $A$ be the Grothendieck ring of $\C$. Let $r$ be its rank, and $d$ the smallest dimension of a nontrivial simple object (call it $X$). Clearly $d>1$. Let $d_i$ be the dimensions of simple objects and $p_i$ the dimensions of the corresponding indecomposable projectives. So $\sum_i p_id_i=p$. Also, since ${\rm FPdim}(A)>1$ and divides $p$, we must have ${\rm FPdim}(A)=p$ and $D=1$. Also we have ${\rm Ext}^1(\bold 1,\bold 1)=0$, 
since otherwise the tensor subcategory of $\C$ formed by iterated extensions of $\bold 1$ is nontrivial, and its dimension 
must divide $p$ by \cite{EGNO}, Theorem 7.17.6.

\begin{proposition} \label{fourob}
(i) $\C$ has at least $4$ simple objects. 

(ii) $p\ge 37$. 
\end{proposition}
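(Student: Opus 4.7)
The plan is to prove (i) by ruling out ranks $r \le 3$ separately, and then to deduce (ii) by feeding the resulting bound $r \ge 4$ into Proposition~\ref{lowbo}(i).

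The cases $r = 1, 2$ I dispatch immediately. Rank $1$ would make $\C$ pointed, which we have excluded. For rank $2$, Proposition~\ref{p4} applied with $n = 1$ leaves no admissible outcome (neither $n = 2$ with $\C$ pointed nor $\FPdim(\C) = 6$ is compatible with $p$ an odd prime and $\C$ non-pointed); equivalently, Corollary~\ref{c3} instantly forces $\FPdim(A) \mid 1$, absurd at rank $2$.

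Rank $r = 3$ is the heart of (i) and the main obstacle. I plan to argue purely at the level of the fusion ring. Let the simples be $\bold 1, X, Y$ with $d_X, d_Y \ge 2$ (the absence of nontrivial invertibles being a consequence of $\FPdim(\C) = p$ prime and $\C$ non-pointed, via the fact that a pointed subcategory has FPdim dividing $p$). Since $d_{X^*} = d_X$ and $d_{Y^*} = d_Y$, the duals $X^*, Y^*$ lie in $\{X, Y\}$. In the self-dual case $X^* = X$, $Y^* = Y$, I write $X^2 = \bold 1 + aX + bY$ and $Y^2 = \bold 1 + eX + cY$; the fusion-ring reciprocity $N_{XY}^X = N_{XX}^Y = b$ and $N_{XY}^Y = N_{YY}^X = e$ then gives $XY = bX + eY$, and taking Frobenius-Perron dimensions yields
\begin{equation*}
d_X^2 = 1 + a d_X + b d_Y, \quad d_Y^2 = 1 + c d_Y + e d_X, \quad d_X d_Y = b d_X + e d_Y.
\end{equation*}
The first equation forces $\gcd(d_X, d_Y) \mid 1$; coprimality then collapses the third equation to $(b, e) \in \{(0, d_X), (d_Y, 0)\}$; substituting back into the first or second equation gives $d_X \mid 1$ or $d_Y \mid 1$, contradicting $d_X, d_Y \ge 2$. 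In the remaining case $X^* = Y$, FP-dimensions force $d_X = d_Y =: d$, and reciprocity $N_{XY}^{\bold 1} = N_{Y \bold 1}^Y = 1$ puts $\bold 1$ into $X \otimes Y$; the FPdim equation for $XY$ reads $d^2 = 1 + d \cdot (\text{nonnegative integer})$, again forcing $d \mid 1$. The crux is the Diophantine squeeze in the self-dual case; the other steps are routine book-keeping.

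Having $r \ge 4$, I invoke Proposition~\ref{lowbo}(i), whose hypotheses (no nontrivial invertibles and $\ext^1(\bold 1, \bold 1) = 0$) are in force, to obtain $\FPdim(\C) \ge 8r + 3 \ge 35$; since $p$ is prime and $35, 36$ are composite, $p \ge 37$, proving (ii).
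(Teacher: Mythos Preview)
Your treatment of $r\le 2$ and of part (ii) is fine and matches the paper. The gap is in the rank-$3$ case, where you invoke identities that need not hold.

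You write $X^2 = \bold 1 + aX + bY$ and use ``fusion-ring reciprocity'' $N_{XY}^X = N_{XX}^Y$. Both of these rely on the based-ring identity $N_{ij}^k = N_{i^*k}^j$ (equivalently, $N_{ij}^{0} = \delta_{i,j^*}$), which in a tensor category comes from the equality $[X_i \otimes X_j : X_k] = \dim\Hom(X_i \otimes X_j, X_k)$. That equality holds in a \emph{semisimple} tensor category, but for a general finite tensor category the Grothendieck ring is only a transitive unital $\Bbb Z_+$-ring with anti-involution, not a based ring: the composition multiplicity $[X_i \otimes X_j : X_k]$ can exceed $\dim\Hom(X_i \otimes X_j, X_k)$. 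For instance, in $\Rep_\k(S_3)$ with ${\rm char}\,\k=2$ one has $X^2 = 2\cdot\bold 1 + X$, so $N_{XX}^0 = 2$ while $\dim\Hom(X\otimes X,\bold 1)=1$. Since at this stage you have not shown $\C$ to be semisimple, your Diophantine analysis (coprimality of $d_X,d_Y$ from the constant term $1$; the equation $d_Xd_Y = bd_X + ed_Y$ with no constant term) is unjustified. The same problem occurs in the dual-pair case, where $N_{XY}^{\bold 1}=1$ is again a fusion-ring identity, not a general one.

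The paper fills exactly this gap before running a computation very close to yours. It first uses Theorem~\ref{bound} to get $3d^2\ge p$, and then a dimension count ($p\ge 1+2d^2+m^2>3d^2$ if $P_X\ne X$, and likewise for $Y$) forces both $X$ and $Y$ to be projective. The block of $\bold 1$ is then a tensor subcategory whose dimension divides $p$, hence is trivial, so $\C$ is semisimple; \emph{now} the fusion-ring calculation (essentially your self-dual case) goes through. So your endgame is correct in spirit, but you are missing the structural step that makes the fusion-ring identities available.
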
 

\begin{proof} 
(i) Suppose $\C$ has two simple objects. Then by Theorem \ref{bound}, $d\ge p/2$, so $p\ge 1+d^2\ge 1+p^2/4$, hence 
$p\le 2$, a contradiction. 

Now assume that $\C$ has three simple objects, $\bold 1$ and $X,Y$ of Frobenius-Perron dimensions $d,m$. 
By Theorem \ref{bound}, we have $d\ge (p/3)^{1/2}$, so $3d^2\ge p$. 
If $P_X\ne X$ then $\dim P_X\ge 2d$ so 
$$
p\ge 1+2d^2+m^2>3d^2\ge p,
$$ 
a contradiction. Similarly, if $P_Y\ne Y$ then $\dim P_Y\ge 2m$ so 
$$
p\ge 1+d^2+2m^2>3d^2\ge p,
$$ 
again a contradiction. Thus $X$ and $Y$ are projective. 
This means that the block of $\bold 1$ is a tensor subcategory of $\C$. 
The Frobenius-Perron dimension of this subcategory must divide $p$, so this subcategory must be trivial. 
Hence $\C$ is semisimple. Then $X^2=1+aX+bY$ with $b\ne 0$, so $d^2=1+ad+bm$, so $(d,m)=1$. Now
$X\otimes Y=sX+tY$, so $dm=sd+tm$, hence $sd=(d-t)m$. But $s,t>0$ since $X$ occurs in $Y\otimes Y$ and $Y$ occurs in $X\otimes X$. This gives a contradiction. 

Thus $\C$ has at least $4$ simple objects. 

(ii) It follows from (i) and Proposition \ref{lowbo} that $p\ge 8\cdot 4+3=35$. Hence $p\ge 37$. 
\end{proof} 

\begin{corollary}\label{bound2} One has 
$$
d\ge 2^{-2/3}p^{\frac{1}{r-1}}.
$$
\end{corollary}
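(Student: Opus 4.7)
The plan is to apply Theorem \ref{bound}(iii) directly, using the rank bound $r \ge 4$ from Proposition \ref{fourob} to extract the explicit constant $2^{-2/3}$.

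Because $\text{FPdim}(\C) = p$ is prime, any tensor subcategory of $\C$ has Frobenius-Perron dimension dividing $p$ (\cite{EGNO}, Theorem 7.17.6), so $\C$ has no proper tensor subcategory. Consequently, the smallest-dimensional nontrivial simple object $X$ (with $\text{FPdim}(X) = d$) is a $\mathbb{Z}_+$-generator of $A$: every simple object of $\C$ appears as a subquotient of some $X^{\otimes k}$, so $1 + X + \cdots + X^n$ has strictly positive coordinates in the basis of simples for $n$ large. With $N = \text{FPdim}(A) = p$, Theorem \ref{bound}(iii) then yields
\[
d \;\ge\; p^{\frac{1}{r-1}}\, \phi(r), \qquad \phi(r) := \left( \frac{\lceil (r-1)/2 \rceil}{r} \right)^{\!\frac{\lceil (r-1)/2 \rceil}{r-1}}.
\]

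It remains to verify the purely numerical claim $\phi(r) \ge 2^{-2/3}$ for every integer $r \ge 4$, with equality at $r = 4$. I would split by parity. For $r = 2k$ with $k \ge 2$, one has $\phi(r) = 2^{-k/(2k-1)}$; the exponent $k/(2k-1)$ decreases monotonically from its value $2/3$ at $k = 2$ toward the limit $1/2$, so $\phi(r) \ge 2^{-2/3}$ with equality iff $r = 4$. For $r = 2k+1$ with $k \ge 2$, one has $\phi(r) = (k/(2k+1))^{1/2}$, and $k/(2k+1)$ is increasing in $k$ with minimum $2/5$ at $k = 2$; the elementary inequality $128 = 2^7 > 5^3 = 125$ rearranges to $2/5 > 2^{-4/3}$, whence $(2/5)^{1/2} > 2^{-2/3}$ and so $\phi(r) > 2^{-2/3}$ in the odd case.

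Combining this numerical bound with $r \ge 4$ gives the desired inequality $d \ge 2^{-2/3} p^{1/(r-1)}$. There is no real obstacle: the corollary is a mechanical consequence of Theorem \ref{bound}(iii) and Proposition \ref{fourob}. The only substantive point is recognizing $X$ as a $\mathbb{Z}_+$-generator, which is immediate from the absence of nontrivial tensor subcategories when $p$ is prime, and the parity-split optimization of $\phi(r)$ which pinpoints $r = 4$ as the worst case.
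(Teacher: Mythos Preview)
Your proposal is correct and follows essentially the same approach as the paper: apply Theorem~\ref{bound}(iii) together with $r\ge 4$ from Proposition~\ref{fourob}, and check that the constant $\phi(r)$ attains its minimum $2^{-2/3}$ at $r=4$. You supply two details the paper leaves implicit---the justification that $X$ is a $\Bbb Z_+$-generator (via primality of $p$ and absence of proper tensor subcategories) and the parity-split verification of the numerical inequality---but the argument is the same.
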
 

\begin{proof} 
This follows from Proposition \ref{fourob}, Theorem \ref{bound}(iii) and the fact that the function 
$$
f(r):=\left( \frac{\lceil \frac{r-1}{2}\rceil}{r}\right)^{\frac{\lceil \frac{r-1}{2}\rceil}{r-1}}
$$
satisfies $f(r)\ge f(4)=2^{-2/3}$ if $r\ge 4$. 
\end{proof} 

\begin{proposition}\label{5sim} Let $H$ be a quasi-Hopf algebra of prime dimension $p$ with $r$ simple modules, where $r>1$ is not of the form $4k+1$ with $k\in \Bbb Z_+, k\ge 2$ (e.g., $r\le 8$). Then there exists a simple $H$-module 
$X\ne \bold 1$ such that $X\cong X^{**}$. 
\end{proposition}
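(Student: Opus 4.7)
The plan is to study the permutation $\sigma$ on isomorphism classes of simple $H$-modules induced by $V \mapsto V^*$. If $\C := \Rep H$ is pointed, then $r > 1$ together with the primality of $p$ forces $\C \simeq \Vec_{\k}(\Z/p, \omega)$, in which case every simple is invertible and $X^{**} \cong X$ holds automatically; so I may assume $\C$ is not pointed. Then the subcategory of invertible objects of $\C$ has FP-dimension dividing $p$ and strictly less than $p$, hence is trivial, so the distinguished invertible object $D$ of the categorical Radford formula equals $\mathbf{1}$. Consequently $X^{****} \cong D \otimes X \otimes D^{-1} \cong X$ for every simple $X$, so $\sigma^4 = \id$ on iso classes of simples.

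The $\sigma$-orbits on iso classes of simples therefore have size $1$, $2$, or $4$, and $X \cong X^{**}$ iff the $\sigma$-orbit of $X$ has size at most $2$. If no nontrivial simple satisfies $X \cong X^{**}$, then the $r-1$ nontrivial simples are partitioned into $\sigma$-orbits of size exactly $4$, forcing $r = 4m+1$ for some $m \geq 1$. The hypothesis $r \neq 4k+1$, $k \geq 2$, then excludes $m \geq 2$, leaving only $r = 5$, with a single size-$4$ orbit $\{X_1, X_2, X_3, X_4\}$ (where $X_{i+1} \cong X_i^*$ with indices taken mod $4$) exhausting the nontrivial simples, all with common FP-dimension $d$ and common projective cover dimension $m$.

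The remaining step --- ruling out the rank-$5$ single-orbit configuration --- is the main obstacle. First, none of the $X_i$ can be projective: otherwise all four would be, by $\sigma$-equivariance, and then the block of $\mathbf{1}$ would be a proper unipotent tensor subcategory of FP-dimension strictly between $1$ and $p$, contradicting primality. Hence by Proposition~\ref{p1} we have $m \geq 2d$, and combining this with the identity $p = \dim P_{\mathbf{1}} + 4 d m$, the lower bound $d \geq 2^{-2/3} p^{1/4}$ from Corollary~\ref{bound2}, the bound $p \geq 37$ from Proposition~\ref{fourob}, the unimodularity consequence $\soc(P_{X_i}) = X_i^{**} = X_{i+2}$ (distinct from the head $X_i$), and the $\Bbb Z_+$-ring identities $d^2 = N_{11}^0 + d \sum_{j=1}^{4} N_{11}^j$ and its analogues (forcing $d \mid N_{11}^0$ and analogous divisibilities), one extracts a numerical contradiction. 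I expect this final elimination --- balancing the dimension identities, the $\sigma$-equivariance of the Cartan matrix, and the Loewy structure of the $P_{X_i}$ against the primality of $p$ --- to be the most delicate part of the proof.
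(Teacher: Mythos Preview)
Your reduction to the case $r = 5$ is essentially the same as the paper's (the paper argues $\chi = \bold 1$ directly from $\chi^{**}\cong\chi$ rather than via the pointed subcategory, but both work). The real issue is the elimination of $r = 5$, which you explicitly leave unfinished: you assemble several ingredients (Proposition~\ref{p1}, Corollary~\ref{bound2}, unimodularity, divisibilities among the $N_{ij}^k$) and then assert that ``one extracts a numerical contradiction'' without doing so. In fact the inequalities you list do \emph{not} yield a contradiction: combining $p_1\ge 2d$ with $d\ge 2^{-2/3}p^{1/4}$ only gives $p>8d^2\ge 2^{5/3}\sqrt{p}$, which holds for all $p\ge 37$. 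The Loewy and socle information you mention does not close the gap either.

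The paper's argument for $r=5$ is short and uses an idea absent from your list. Write $p_0=\dim P_{\bold 1}$ and $p_1=\dim P_{X_j}$ (the same for $j=1,\dots,4$). Since $\Hom(P_0,X)=0$, the projective object $P_0\otimes X^*$ contains no summand $P_0$, so it is a direct sum of copies of $P_1,\dots,P_4$; comparing dimensions gives $d\,p_0=n\,p_1$ for some integer $n$. But $p_0+4d\,p_1=p$ is prime, so $\gcd(p_0,p_1)=1$, whence $p_1\mid d$. Since also $p_1\ge d$, we get $p_1=d$, i.e.\ each $X_j$ is projective. Then every composition factor of the indecomposable $P_0$ must be $\bold 1$, forcing $\ext^1(\bold 1,\bold 1)\ne 0$ --- contradicting the standing fact (established earlier in the section) that $\ext^1(\bold 1,\bold 1)=0$. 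The missing key is the coprimality of $p_0$ and $p_1$, which is exactly where the primality of $p$ enters.
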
 

\begin{proof} Assume the contrary. Then the distinguished invertible object $\chi$ of $H$ must be $\bold 1$ (as $\chi^{**}\cong \chi$), so $X^{****}\cong X$ for all $X$ by categorical Radford's formula (\cite{EGNO}, Subsection 7.19). Thus the group $\Bbb Z/4$ acts freely on nontrivial simple $H$-modules by taking the dual. This implies that $r=4k+1$. 

It remains to show that $r\ne 5$. Indeed, for $r=5$ the nontrivial simple objects are 
$X_1=X$, $X_2=X^*$, $X_3=X^{**}$, $X_4=X^{***}$, so $d_j=d$ and $p_j=p_1$ for $1\le j\le 4$.
Consider the product $P_0\otimes X^*$, of dimension $dp_0$. It decomposes in a direct sum of $P_j$, $1\le j\le 4$. 
Thus we have $dp_0=np_1$, where $n$ is the number of summands. But $p_0$ and $p_1$ are coprime (since $p_0+4dp_1=p$), so $d$ is divisible by $p_1$. Hence $d=p_1$ and $X_j$ are projective for $1\le j\le 4$. Thus ${\rm Ext}^1(\bold 1,\bold 1)$ 
cannot be zero, a contradiction. 
\end{proof} 

\section{Hopf algebras of dimension $p$} 

Now let $H$ be a Hopf algebra of dimension $p$ (a prime) over a field $\k$ of characteristic $q$. 
It was recently proved by R. Ng and X. Wang \cite{NW} that if $4q\ge p$ or $q=2$ then $H={\rm Fun}(\Bbb Z/p)$ 
or $q=p$ and $H=\k[x]/x^p$ with $\Delta(x)=x\otimes 1+1\otimes x$ or $\Delta(x)=x\otimes 1+1\otimes x+x\otimes x$. 
This is conjectured to hold for any $q$, and our goal is to improve the bound $4q\ge p$. 

Assume that $q>2$ and $H$ is not of this form. If $H$ is semisimple and cosemisimple then by \cite{EG3}, Theorem 3.4, 
$H$ is commutative and cocommutative, so this is impossible. Thus, by replacing $H$ with $H^*$ if needed (as done in \cite{NW}), we may (and will) assume that $H$ is not semisimple. Also, by a standard argument, $H$ and $H^*$ have no nontrivial $1$-dimensional modules (as their order must divide $p$). In particular, the distinguished group-like elements of $H$ and $H^*$ must be trivial , so $S^4=1$. Thus $S^2\ne 1$ (as $\dim H=0$ in $\k$) and defines an involution on the simple $H$-modules $X_i$, sending $X_j$ to $X_j^{**}$. Also, $H$ is simple, so ${\rm Rep}(H)$ is tensor generated by any nontrivial simple module and ${\rm Ext}^1(\bold 1,\bold 1)=0$.

Let $r$ be the number of simple $H$-modules, $d_j$ be the dimension of $X_j$ and $p_j$ be the dimension of the projective cover $P_j$ of $X_j$. 

\begin{proposition}\label{rankbound} (i) (cf. \cite{EG3}, Lemma 2.10) There exists $i$ such that $X_i^{**}=X_i$ and $p_i$ 
and $d_i$ are both odd. Moreover, if there exists $j\ne 0$ with $X_j\cong X_j^{**}$ then 
there exists $i\ne 0$ with $p_i$ odd. 

(ii) (\cite{NW}, Lemma 2.3) Suppose $p_i$ is odd and $X_i^{**}=X_i$. Then $p_i\ge q+2$. 

(iii) Let $d_*$ be the maximum of all $d_j$, and let $i$ be such that $X_i\cong X_i^{**}$ and $p_i$ is odd, with largest possible $d_i$. Then we have 
$$
p\ge (q+2)\left(d_i+\frac{1}{d_i}\sum_{k\ne i}{\rm max}(d_id_k/d_*,1)\right).
$$ 

(iv) If $d_i=d_*$ then 
$$
p\ge (q+2)\left(d_i+\frac{\sum_{k\ne i}d_k}{d_i}\right).  
$$ 
In particular,
$$
p\ge (q+2)\left(d_i+\frac{2r-3}{d_i}\right).  
$$ 
If $d_i<d_*$ then 
$$
p\ge (q+2)\left(d_i+1+\frac{r-2}{d_i}\right).
$$ 

(v) One has $p\ge 2(q+2)\sqrt{r-1}$. 

(vi) If $d_i>1$ then $d_i\ge 3$. 
\end{proposition}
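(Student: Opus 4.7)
The plan is to argue by contradiction, assuming $d_i = 2$, and then to invoke Lemma~\ref{no2} directly to derive a contradiction with the setup established at the beginning of Section~8.

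First I would record the standing hypotheses of this section that are relevant. Since we are in the case $q > 2$, the characteristic of $\k$ is different from $2$. Moreover, as explained at the start of the section, $H$ is simple as a Hopf algebra (any proper Hopf quotient would have dimension dividing $p$, forcing a nontrivial $1$-dimensional module or cotriviality), $H$ has no nontrivial $1$-dimensional modules, $\mathrm{Ext}^1(\mathbf{1},\mathbf{1}) = 0$, the distinguished group-likes of $H$ and $H^*$ are trivial, and consequently $S^4 = 1$ while $S^2 \ne 1$.

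Next I would use the specific choice of $X_i$: by definition $X_i \cong X_i^{**}$. Suppose for contradiction that $d_i = 2$. Then $V := X_i$ is an irreducible $2$-dimensional $H$-module satisfying $V \cong V^{**}$, and all the hypotheses of Lemma~\ref{no2} are met: $H$ is simple, $\mathrm{char}(\k) \ne 2$, and $S^2 \ne 1$. That lemma then forces $S^4 \ne 1$, contradicting the conclusion $S^4 = 1$ recorded above. Hence $d_i \ne 2$, and since $d_i > 1$ by assumption, we conclude $d_i \ge 3$.

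The only subtlety — not really an obstacle, but worth double-checking — is that the machinery of Lemma~\ref{no2} does apply in our setting, i.e.\ that $H$ is genuinely simple and that $S^2$ is genuinely nontrivial. Both facts are already part of the standing assumptions of Section~8, so once those are laid out cleanly, the deduction is immediate and the argument is essentially a one-line application of Lemma~\ref{no2}.
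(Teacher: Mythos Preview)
Your argument for part (vi) is correct and is exactly the paper's approach: the paper's proof of (vi) reads in its entirety ``This follows from Lemma~\ref{no2},'' and you have simply unpacked that reference by verifying that the standing hypotheses of Section~8 (simplicity of $H$, $\mathrm{char}(\k)\ne 2$, $S^2\ne 1$, $S^4=1$, and $X_i\cong X_i^{**}$) match those of Lemma~\ref{no2}.
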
 

\begin{proof} (i) Let $J$ be the set of $j$ such that $X_j^{**}\cong X_j$. Since double dual is an involution, we have 
that $\sum_{j\notin J}p_jd_j$ is even. Hence $\sum_{j\in J}p_jd_j$ is odd (as they add up to $p$). So there exists 
$i$ such that $p_id_i$ is odd, which proves the first statement. 

To prove the second statement, we may assume that $d_j$ or $p_j$ is even, otherwise 
we can take $i=j$. We may also assume that $p_0$ is odd; otherwise $i\ne 0$ automatically. 
Now consider the even-dimensional object $X_j^*\otimes P_j=P_0\oplus \oplus_{k\ne 0}m_kP_k$. We see that 
$\sum_{k\ne 0}m_kp_k$ is odd.  So there exists $k$ such that $p_k$ is odd, and moreover, there is an odd number of 
such $k$. Hence the involution $**$ has a fixed point on the set of such $k$, which gives the statement. 

(ii) Let $a: P_i\to P_i^{**}$ be an isomorphism. We may choose $a$ so that $a^2=1$. Then ${\rm Tr}(a)=0$, otherwise $\bold 1$ is a direct summand in $P_i$, hence is projective and $H$ is semisimple, a contradiction. 
Let $m_+,m_-$ be the multiplicities of the eigenvalues $1$ and $-1$ for $a$. Then $m_++m_-=p_i$, 
$m_+-m_-=sq$ for some integer $s$. It is clear that $s$ has to be odd, so $|m_+-m_-|\ge q$, and $m_-,m_+>0$ by Lemma \ref{nontri}. Thus $p_i\ge q+2$.

(iii) By Lemma \ref{easybound}, $p_k\ge N_{ji}^kp_i/d_j\ge N_{ji}^k(q+2)/d_j$ for any $j,k$. 
It is clear that for each $k$ there exists $j$ such that $X_k$ is a composition factor of $X_j\otimes X_i$ (so that $N_{ji}^k\ge 1$) and $d_j\le d_id_k$. Thus 
$$
p_kd_k\ge (q+2){\rm max}(d_id_k/d_*,1)/d_i.
$$ 
Hence 
$$
p=p_id_i+\sum_{k\ne i} p_kd_k\ge (q+2)\left(d_i+\frac{1}{d_i}\sum_{k\ne i}{\rm max}(d_id_k/d_*,1)\right),
$$
as desired. 

(iv) The first inequality follows immediately from (iii). The second inequality follows from the first one, as $d_k\ge 2$ for 
$k\ne 0$. To prove the third inequality, take $j$ such that $d_j=d_*$, and separate the corresponding term in the sum in (iii), while replacing all the other terms by $1$. This gives the statement. 

(v) It follows from (iv) that 
$$
p\ge (q+2)\left(d_i+\frac{r-1}{d_i}\right), 
$$ 
so the statement follows by the arithmetic-geometric mean inequality. 

(vi) This follows from Lemma \ref{no2}. 
\end{proof} 

The following corollary improves the bound $p>4q$ from \cite{NW}. 

\begin{corollary}\label{9/2} One has $p>\frac{14}{3}(q+2)$.  
\end{corollary}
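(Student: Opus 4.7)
The plan is to apply Proposition~\ref{rankbound}(iv) directly, using $r\ge 4$ from Proposition~\ref{fourob}(i) and the lower bound $d_j\ge 3$ for all nontrivial simples from Proposition~\ref{rankbound}(vi), in order to obtain the non-strict bound $p\ge\frac{14}{3}(q+2)$, and then promote it to strict inequality by exploiting that $p$ is prime.

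First I would split into two cases according to whether the double-dual involution (well defined since $S^4=1$) has any fixed point besides $\bold 1$ on the set of simple objects. If there is none, then $**$ acts freely on the $r-1$ nontrivial simples, forcing $r-1$ to be even and hence $r\ge 5$. Proposition~\ref{rankbound}(i) then forces the index $i$ appearing in part~(iii) to be $0$, with $p_0$ odd; part~(ii) gives $p_0\ge q+2$, and part~(iii) with $i=0$, $d_i=1$, and $\max(d_k/d_*,1)=1$ for every $k\ne 0$, yields $p\ge(q+2)r\ge 5(q+2)$, which already exceeds $\frac{14}{3}(q+2)$.

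In the remaining case Proposition~\ref{rankbound}(i) produces $i\ne 0$ with $X_i\cong X_i^{**}$ and $p_i$ odd, and then $d_i\ge 3$ by~(vi). Applying both branches of Proposition~\ref{rankbound}(iv): if $d_i=d_*$, then $p\ge(q+2)\bigl(d_i+(2r-3)/d_i\bigr)\ge(q+2)(3+5/3)=\frac{14}{3}(q+2)$; if $d_i<d_*$, then $p\ge(q+2)\bigl(d_i+1+(r-2)/d_i\bigr)\ge(q+2)(3+1+2/3)=\frac{14}{3}(q+2)$. In both branches the minimum is attained at $d_i=3$, $r=4$, so one only obtains $p\ge\frac{14}{3}(q+2)$.

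The main obstacle is upgrading this to a strict inequality, because the bound of Proposition~\ref{rankbound}(iv) really is tight at $d_i=3$, $r=4$: for instance in the $d_i<d_*$ branch the dimension pattern $(1,3,3,d_*)$ with $d_*\ge 9$ saturates it, and attempting to refine (iv) using $d_k\ge 3$ only improves the $d_i=d_*$ branch, not the $d_i<d_*$ branch. I would therefore handle strictness by using that $p$ is a prime (necessarily $\ge 37$ by Proposition~\ref{fourob}(ii)): if $3\nmid q+2$, then $\frac{14}{3}(q+2)$ is not an integer, so the integer $p$ satisfies $p>\frac{14}{3}(q+2)$ automatically; if $3\mid q+2$, then $\frac{14}{3}(q+2)=14\cdot\frac{q+2}{3}$ is a positive multiple of $14$, hence composite, so the prime $p$ cannot equal it, again giving $p>\frac{14}{3}(q+2)$.
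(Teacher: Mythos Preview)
Your argument is correct and follows essentially the same route as the paper: both combine Proposition~\ref{rankbound}(i),(iv),(vi) with the bound $r\ge 4$ from Proposition~\ref{fourob}, and the only difference is cosmetic (you split first on whether $**$ has a nontrivial fixed simple, whereas the paper splits first on $r=4$ versus $r\ge 5$). Your explicit primality argument for the strict inequality is in fact a small improvement over the paper, which in the $r=4$ case simply cites Proposition~\ref{rankbound}(iv) for $p>\tfrac{14}{3}(q+2)$ even though that proposition as stated only yields $p\ge\tfrac{14}{3}(q+2)$; your observation that $\tfrac{14}{3}(q+2)$ is either non-integral or a multiple of $14$ (hence composite) cleanly fills this gap.
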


\begin{proof} By Proposition \ref{fourob}, $r\ge 4$. Assume first that $r=4$. It is clear that $X^{**}=X$ for any 
simple object $X$ (otherwise we would have objects $X,X^*,X^{**},X^{***}$ 
which are pairwise nonisomorphic, which would give $r\ge 5$). Thus by 
Proposition \ref{rankbound}(i), we have $d_i\ge 2$, hence by Proposition \ref{rankbound}(vi)
$d_i\ge 3$. Thus, by Proposition \ref{rankbound}(iv) we get $p>14(q+2)/3$.

It remains to consider the case $r\ge 5$. In this case, if $d_i\ge 2$ then by Proposition \ref{rankbound}(vi) $d_i\ge 3$, so 
if $d_i=d_*$ then by Proposition \ref{rankbound}(iv) we have $p>16(q+2)/3$, and 
if $d_i\ne d_*$ then Proposition \ref{rankbound}(iv) yields $p>5(q+2)$.
On the other hand, if $d_i=1$ then Proposition \ref{rankbound}(iv) yields 
$p>r(q+2)\ge 5(q+2)$. 

So in all cases we have $p>14(q+2)/3$, which proves the statement. 
\end{proof} 

\begin{proposition}\label{inner} Suppose that for some $i$ we have $X_i\cong X_i^{**}$, $p_i$ is odd and $d_i>1$. 
Then 
$$
\frac{p}{q+2}> 2^{-2/3}p^{\frac{1}{r-1}}.
$$
\end{proposition}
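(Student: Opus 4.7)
The plan is to chain together three lower bounds that are already available. First, by Proposition \ref{rankbound}(ii), the hypotheses that $p_i$ is odd and $X_i\cong X_i^{**}$ immediately give $p_i\ge q+2$. Second, since $d_i>1$, the object $X_i$ is a nontrivial simple object of $\C=\Rep(H)$, hence $d_i\ge d$ where $d$ denotes the smallest dimension of a nontrivial simple object of $\C$; Corollary \ref{bound2} (whose hypothesis $r\ge 4$ is ensured by Proposition \ref{fourob}) then supplies
$$
d\ge 2^{-2/3}p^{\frac{1}{r-1}}.
$$
Third, the identity $p=\sum_{j}p_jd_j$ contains the strictly positive term $p_0d_0=p_0\ge 1$ coming from $X_0=\bold 1$, which is distinct from $X_i$ because $d_i>1$; hence $p>p_id_i$ strictly.

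Multiplying and rearranging, one obtains
$$
p\;>\;p_id_i\;\ge\;(q+2)\cdot 2^{-2/3}p^{\frac{1}{r-1}},
$$
and dividing by $q+2$ yields the claimed inequality. The entire content of the hypotheses $d_i>1$, $p_i$ odd, and $X_i\cong X_i^{**}$ is used only to legitimize the factor $q+2$ via Proposition \ref{rankbound}(ii) and to guarantee that $X_i$ is among the nontrivial simples; no additional tensor-categorical input is required. In this sense there is no real obstacle to the argument: the substantive work was carried out in Section 7 (in the exponential lower bound on $d$ of Corollary \ref{bound2}) and in Proposition \ref{rankbound}(ii), and the present proposition is essentially the assembly of those two estimates together with the trivial strict inequality $p>p_id_i$.
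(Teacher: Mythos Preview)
Your proof is correct and follows essentially the same argument as the paper's own proof. The paper proceeds identically: it invokes Corollary \ref{bound2} to get $d_i\ge 2^{-2/3}p^{1/(r-1)}$, invokes Proposition \ref{rankbound}(ii) to get $p_i\ge q+2$, and chains the strict inequality $p>p_id_i$ with these two bounds; your version simply makes the intermediate step $d_i\ge d$ and the reason for strictness more explicit.
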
 

\begin{proof} By Corollary \ref{bound2}, we have 
$$
d_i\ge 2^{-2/3}p^{\frac{1}{r-1}}. 
$$
Thus by Proposition \ref{rankbound}(ii) we have 
$$
p> p_id_i\ge (q+2)d_i\ge 2^{-2/3}(q+2)p^{\frac{1}{r-1}},
$$
which implies the statement. 
\end{proof} 

\begin{corollary}\label{inner1} (i) Let $H$ be a non-semisimple Hopf algebra of dimension $p$ in characteristic $q$ 
for which there exists $i\ne 0$ such that $X_i^{**}\cong X_i$ (for instance, $**$ is isomorphic to ${\rm Id}$ as an additive functor on $\Rep(H)$, i.e., $S^2$ is an inner automorphism). 
Then if
$$
\frac{p}{q+2}\le  2^{-2/3}p^{\frac{1}{r-1}}.
$$
then $H$ is commutative and cocommutative. 

(ii) Let $W$ be the Lambert $W$-function (the inverse to the function $f(z)=ze^z$), and 
$$
\phi(x):=2^{-2/3}\exp\left(\frac{1}{2}W(2^{13/3}\log x)\right). 
$$ 
If 
$$
\frac{p}{q+2}\le \phi(p)
$$
then $H$ is commutative and cocommutative. 
\end{corollary}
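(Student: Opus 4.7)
Both parts proceed by contradiction, assuming $H$ is not commutative and cocommutative so that the running assumptions of Section~8---that $H$ is non-semisimple, that $H$ and $H^*$ have no nontrivial $1$-dimensional modules, and that $S^4 = 1$---are in force. Under the hypothesis that some $X_j \neq \bold 1$ satisfies $X_j \cong X_j^{**}$, the second statement of Proposition \ref{rankbound}(i) yields an $i \neq 0$ with $p_i$ odd, and inspecting its proof shows that $**$ acts as an involution on the odd-sized set $\{k \neq 0 : p_k \text{ odd}\}$ and so must have a fixed point there; we may therefore additionally arrange $X_i \cong X_i^{**}$. Since there are no nontrivial $1$-dimensional $H$-modules, $d_i > 1$, so Proposition \ref{inner} applies and gives
$$
\frac{p}{q+2} > 2^{-2/3}\, p^{\frac{1}{r-1}}.
$$
This directly contradicts the hypothesis of (i), proving (i).

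For (ii), I would start from the same inequality and combine it with Proposition \ref{rankbound}(v), which gives $r - 1 \leq \frac{p^2}{4(q+2)^2}$. Setting $y := p/(q+2)$ so that $\frac{1}{r-1} \geq \frac{4}{y^2}$, and using $p > 1$, the bound from (i) becomes
$$
y > 2^{-2/3}\, p^{4/y^2}.
$$
Taking logarithms yields $y^2 \log(2^{2/3} y) > 4 \log p$; the substitution $w = 2^{2/3} y$, $v = \log w$ then rearranges this into $(2v)\, e^{2v} > 2^{13/3} \log p$. The defining property of the Lambert $W$-function immediately yields $2v > W(2^{13/3} \log p)$, and exponentiating back produces
$$
y > 2^{-2/3} \exp\!\left(\tfrac{1}{2} W(2^{13/3} \log p)\right) = \phi(p),
$$
contradicting the hypothesis of (ii).

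The conceptual content of the argument is entirely contained in Propositions \ref{rankbound} and \ref{inner}; the only genuinely new step is the final algebraic massaging into Lambert-$W$ form. The main place to be careful is tracking the powers of $2$ through the substitution---in particular, that $4 \cdot 2^{4/3} = 2^{10/3}$ and that multiplying by $2$ to bring the inequality into the standard $u e^u$ shape introduces the remaining factor producing $2^{13/3}$ in the argument of $W$---so that the exponent matches the definition of $\phi$ cleanly.
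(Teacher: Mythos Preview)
Your argument is correct and follows the same route as the paper: for (i) you invoke Proposition~\ref{rankbound}(i) to obtain $i\ne 0$ with $p_i$ odd and $X_i\cong X_i^{**}$, then apply Proposition~\ref{inner}; for (ii) you combine this with Proposition~\ref{rankbound}(v) and solve, which is exactly what the paper does (the paper phrases it as minimizing $\max(2\sqrt{x},2^{-2/3}p^{1/x})$ at the crossing point $s$, whereas you substitute the bound $r-1\le y^2/4$ directly---these are the same computation). One small inaccuracy: the set on which you claim $**$ has a fixed point is not $\{k\ne 0: p_k\ \text{odd}\}$ but rather $\{k\ne 0: m_k p_k\ \text{odd}\}$ (with $m_k$ the multiplicity of $P_k$ in $X_j^*\otimes P_j$), which is what the proof of Proposition~\ref{rankbound}(i) actually shows to have odd size; your stated set need not be odd-sized, but the conclusion you draw is unaffected.
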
 

\begin{proof} 
(i) By Proposition \ref{inner}, it suffices to show that there exists $i$ such that $X_i^{**}\cong X_i$, 
$d_i>1$ and $p_i$ is odd. But this follows from Proposition \ref{rankbound}(i). 

(ii) Fix $p$ and note that the function $2\sqrt{x}$ is increasing and $2^{-2/3}p^{1/x}$ is decreasing 
in $x$. Thus if $s$ is the solution of the equation 
$$
2\sqrt{s}=2^{-2/3}p^{1/s}
$$
then by Proposition \ref{rankbound}(v) and part (i), 
$$
\frac{p}{q+2}>{\rm max}(2\sqrt{x},2^{-2/3}p^{1/x})|_{x=r-1}\ge 2\sqrt{s}.
$$
Let $y=2^{5/3}\sqrt{s}$. We have 
$$
y=p^{2^{10/3}/y^2}.
$$ 
Thus 
$$
2^{13/3}\log p=y^2\log(y^2).
$$
So $\log(y^2)=W(2^{13/3}\log p)$.
Thus we get 
$$
y=\exp\left(\frac{1}{2}W(2^{13/3}\log p)\right), 
$$
so
$$
2\sqrt{s}=2^{-2/3}\exp\left(\frac{1}{2}W(2^{13/3}\log p)\right).
$$
This implies (ii). 
\end{proof} 

\begin{remark} It is easy to show that 
$$
\phi(x)\sim 2\sqrt{2} \left(\frac{\log x}{\log \log x}\right)^{1/2}
$$
as $x\to \infty$. 
\end{remark} 

\begin{theorem}\label{inner2} 
Let $H$ be a Hopf algebra of prime dimension $p$ over a field $\k$ characteristic $q>2$, which is not commutative and cocommutative. Then
$$
\frac{p}{q+2}>{\rm max}\left(\frac{14}{3},{\rm min}(9,\phi(p))\right).
$$
\end{theorem}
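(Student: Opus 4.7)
The plan is to dispatch the two terms of the maximum separately. Corollary \ref{9/2} already gives $\frac{p}{q+2}>\frac{14}{3}$, so what remains is $\frac{p}{q+2}>\min(9,\phi(p))$, which I would obtain by splitting on whether some nontrivial simple $H$-module $X$ satisfies $X\cong X^{**}$.

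\textbf{Case 1:} there exists $i\ne 0$ with $X_i\cong X_i^{**}$. This is precisely the hypothesis of Corollary \ref{inner1}(ii), and since $H$ is not commutative and cocommutative, the contrapositive of that corollary gives
$$
\frac{p}{q+2}>\phi(p)\ge \min(9,\phi(p)).
$$

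\textbf{Case 2:} no nontrivial simple is self-$**$. Since $S^4=1$, the duality functor $*$ on $\Rep(H)$ has order dividing four, and the condition $X\cong X^{**}$ is exactly the condition that the $\langle *\rangle$-orbit of $X$ has size at most two. Under our assumption, every orbit among the nontrivial simples therefore has size exactly four, so $r=4k+1$. Proposition \ref{5sim} rules out $r=5$, hence $r\ge 9$. By Proposition \ref{rankbound}(i) there exists a self-$**$ index $i$ with $p_i$ odd; in Case 2 the only self-$**$ simple is $\bold 1$, so $i=0$ and $p_0$ is odd. Since $H$ has no nontrivial one-dimensional modules, $d_0=1<d_*$, so the $d_i<d_*$ branch of Proposition \ref{rankbound}(iv) gives
$$
p\ \ge\ (q+2)\bigl(d_0+1+\tfrac{r-2}{d_0}\bigr)\ =\ r(q+2)\ \ge\ 9(q+2).
$$
Primality of $p$ together with $r\ge 9>1$ rules out equality (since $p=r(q+2)$ would exhibit $r$ as a proper divisor of $p$), yielding $\frac{p}{q+2}>9\ge \min(9,\phi(p))$.

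The proof is essentially a case split assembled from Corollaries \ref{9/2} and \ref{inner1}, Proposition \ref{5sim}, and parts (i) and (iv) of Proposition \ref{rankbound}; no single step is delicate. The mildest care needed is the orbit-counting that reduces Case 2 to $r\ge 9$, and the observation that primality of $p$ upgrades the Case 2 bound $p\ge 9(q+2)$ to the strict inequality demanded by the theorem.
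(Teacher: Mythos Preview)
Your proof is correct and follows essentially the same route as the paper: split on whether some nontrivial simple satisfies $X\cong X^{**}$, invoke Corollary~\ref{inner1} in the first case, and combine Proposition~\ref{5sim} with the $d_i=1$ branch of Proposition~\ref{rankbound}(iv) in the second to get $p\ge r(q+2)\ge 9(q+2)$, then add Corollary~\ref{9/2}. Your citation of part~(iv) rather than~(v) is in fact the correct one for obtaining $p\ge r(q+2)$ (the paper's reference to~(v) appears to be a slip), and your primality argument cleanly supplies the strict inequality that the paper asserts without comment.
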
  

\begin{proof} If there exists simple $X\ne \bold 1$ with $X\cong X^{**}$ then Corollary \ref{inner1} applies. 
Otherwise, Proposition \ref{5sim} and Proposition \ref{rankbound}(v) imply that 
$p> r(q+2)\ge 9(q+2)$. This together with Corollary \ref{9/2} implies the theorem. 
\end{proof}

\end{document}